\newtheorem{theorem}{Theorem}
\newtheorem{lemma}[theorem]{Lemma}
\newtheorem{proposition}[theorem]{Proposition}
\newtheorem{remark}[theorem]{Remark}
\newenvironment{proof}[1][Proof]{\noindent\textbf{#1.} }{\ \rule{0.5em}{0.5em}}
\def\R{{\mathbb R}}
\def\E{{\mathbb E}}
\def\P{{\mathbb P}}
\def\S{{\mathbb S}}
\begin{document}

\title{Strong Local Nondeterminism and Exact Modulus of Continuity for
Spherical Gaussian Fields}
\author{Xiaohong Lan \\
School of Mathematical Sciences \\ University of Science and Technology of China
\and Domenico Marinucci \thanks{Corresponding author} \\
Department of Mathematics \\ University of Rome Tor Vergata \and Yimin Xiao \\
Department of Statistics and Probability\\ Michigan State University}
\maketitle

\begin{abstract}
In this paper, we are concerned with sample path properties of  isotropic spherical
Gaussian fields on $\S^2$. In particular, we establish the property of strong local
nondeterminism of an  isotropic spherical Gaussian field based on the high-frequency
behaviour of its angular power spectrum; we then exploit this result to establish
an exact uniform modulus of continuity for its sample paths. We also discuss the
range of values of the spectral index for which the sample functions exhibit fractal
or smooth behaviour.
\end{abstract}

{\sc Key words}: Spherical Gaussian fields, strong local nondeterminism, uniform
modulus of continuity, spherical wavelets.

{\sc 2010 Mathematics Subject Classification}:  60G60, 60G17, 60G15, 42C40.

\section{\protect\bigskip Introduction and Overview}

\subsection{Motivations}

The analysis of sample path properties of random fields has been
considered by many authors, see, 
for instance, \cite{DMX15, FalconerXiao, LiWangXiao2015,
LuanXiao, MeerschaertXiao, Pitt, Ta95, Ta98, X07, Xiao09} and their
combined references. These papers have covered a wide variety of
circumstances, including scalar and vector valued random fields,
isotropic and anisotropic behaviour, analytic and geometric properties.
The parameter space of the random fields in these references, however,
has been typically considered to be Euclidean, i.e., 
$\mathbb{R}^{k},$ for $k\geq 1.$

From the point of view of applications, however, there is of course a lot of
interest in investigating  sample path properties of random fields
defined on manifolds. In particular, we shall focus here on isotropic
random fields defined on the unit sphere $\S^{2};$ these fields have
considerable mathematical interest by themselves, and arise very naturally
in a number of scientific areas, i.e., geophysics, astrophysics and
cosmology, athmospheric sciences, image analysis, to name only a few, see
\cite{MPbook} for a systematic account. To the best of our knowledge, very
little is currently known on the sample path properties of these fields,
even under Gaussianity and Isotropy assumptions; the only currently
available references seem to be \cite{lang2, lang1}, which investigate
differentiability and H\"older continuity properties of the sample functions
in terms of the so-called spectral index, to be defined below.

Our aim in this paper is to pursue this line of investigation further and to
provide two main results. The first of these results is to establish a property of
strong local nondeterminism for a large class of  isotropic spherical Gaussian
fields. In the Euclidean setting, the notion of strong local nondeterminism has
played a pivotal role to establish a number of characterizations for sample
trajectories, see again \cite{Pitt, Ta95, Ta98, X07, Xiao09, XiaoReview2013}
for more discussions and review of recent papers; we thus believe that our result
will open a way for similar developments in the area of spherical Gaussian
fields. In particular, by exploiting this property, we are able to establish
our second main result, i.e. the exact uniform modulus of continuity
for isotropic spherical Gaussian fields. The exact form of the scaling depends
in a very explicit way on the behaviour of the angular power spectrum (to be
recalled below) of the field, and we can hence identify the class of models that
lead to fractal properties. In oder to state more precisely these results, we
need to introduce however some more notation and background material,
which we do in the following subsection.

\subsection{Background and notation}

We start by recalling some background from \cite{MPbook} on second order
spherical random fields, by which we mean as usual measurable applications
$T:  \Omega \times \S^{2} \rightarrow \mathbb{R}$, where $\left\{ \Omega,
\Im , \mathbb P \right\} $ is some probability space, such that
 for all $x\in \S^{2}$,
\begin{equation*}
\mathbb E \big(T^{2}(x,\omega ) \big) =\int_{\Omega }T^{2}(x,\omega ) d\P(\omega )<\infty.
\end{equation*}%
Without loss of generality, in the sequel we shall always assume the field
to have zero-mean, $\mathbb{E} \big(T(x,\omega )\big)=0.$ Also, as usual,
by (strong) isotropy we mean that the random fields  $T= \{T(x), x\in \S^{2}\}$ and $T^g
= \{ T(gx ), x \in \S^{2}\}$ have the same law, for all rotations $g\in SO(3).$ $T$ is
called $2$-weakly isotropic if $\E \big(T(x) T(y)\big) = \E \big(T(gx) T(gy)\big)$ for all
$g\in SO(3).$

Given a $2$-weakly isotropic random field $T = \{T(x), x \in \S^2\}$,
the following spectral representation is well known to hold (cf. \cite[Theorem 5.13 ]{MPbook}):
\begin{equation} \label{Eq:T-rep1}
T(x;\omega ) = \sum_{\ell= 0}^\infty  \sum_{m = -\ell}^\ell a_{\ell m}(\omega )Y_{\ell m}(x),
\end{equation}
where $\{Y_{\ell m},\, \ell \ge 0; m = 0, \pm 1, \ldots, \pm \ell\}$ are the
spherical harmonic functions on $\S^2$ and $a_{\ell m} =
\int_{\S^2}T(x)\overline{Y_{\ell m}(x)}\, dx$.
The equality in \eqref{Eq:T-rep1} holds both in $L^{2}(\Omega )$ at every fixed $x,$
and in $L^{2}(\Omega \times \S^{2}),$ i.e.%
\begin{equation*}
\lim_{L\rightarrow \infty }\mathbb{E}\bigg[ T(x)-\sum_{\ell
}^{L}\sum_{m}a_{\ell m}(\omega )Y_{\ell m}(x)\bigg] ^{2}=0 ,
\end{equation*}%
and%
\begin{equation*}
\lim_{L\rightarrow \infty }\mathbb{E}\bigg[ \int_{\S^{2}}\bigg( T(x;\omega)-
\sum_{\ell }^{L}\sum_{m}a_{\ell m}(\omega )Y_{\ell m}(x)\bigg) ^{2}dx%
\bigg] =0.
\end{equation*}%
We recall that the finite-variance condition $\mathbb{E}\big(T^{2}(x) \big)<\infty$
under isotropy automatically entails the mean-square continuity; the spectral
representation hence follows without further assumptions, see \cite%
{MPbook,marpec2013}.

If $T = \{T(x), x \in \S^2\}$ is a Gaussian random field, then its strong isotropy
and $2$-weak isotropy are equivalent. 
The distribution of  an isotropic zero-mean Gaussian field $T = \{T(x), x\in \S^{2}\}$
is fully characterized by the covariance function $\mathbb{E}\big(T(x)T(y)\big).$
By a theorem of Schoenberg  \cite{schoenberg1942}, the latter can be expanded
as follows:%
\begin{equation} \label{Eq:Cov-T}
\mathbb{E} \big(T(x)T(y) \big)=\sum_{\ell =0}^{\infty }\frac{2\ell +1}{4\pi }C_{\ell}
P_{\ell }(\left\langle x,y\right\rangle );
\end{equation}%
here,  $P_0 \equiv 1$ and $P_{\ell }: [-1,1]\rightarrow \mathbb{R}$, for $\ell = 1, \, 2, ..., $
denote the Legendre polynomials, which satisfy the normalization condition $%
P_{\ell }(1)=1$ and can be recovered by Rodrigues' formula as
\begin{equation*}
P_{\ell }(t)=\frac{1}{2^{\ell }\ell !}\frac{d^{\ell }}{dt^{\ell }}%
(t^{2}-1)^{\ell },  \ \ \ \ell = 1,\, 2, ...
\end{equation*}%
On the other hand, the sequence $\left\{ C_{\ell },\  \ell
=0, 1, ... \right\} $ of nonnegative weights represents the so-called angular
power spectrum of the field, and the $\ell$'s are referred to as frequencies
(also labelled multipoles). In terms of the spectral representation, we
have the identification%
\begin{equation}\label{Def:Cl}
\mathbb{E}\big(a_{\ell m}\overline{a}_{\ell ^{\prime }m^{\prime }} \big)=C_{\ell
}\delta _{\ell }^{\ell ^{\prime }}\delta _{m}^{m^{\prime }},
\end{equation}%
so that the angular power spectrum provides the variance of the
(uncorrelated) Gaussian random coefficients $\left\{ a_{\ell m}, \ \ell
=0, 1,2,...; m=-\ell ,...,\ell \right\}.$  By standard Fourier arguments, the
small scale behaviour of the covariance is determined by the behavior of
the angular power spectrum at high frequencies;
namely, the behavior of  $C_\ell$ for as $\ell \to \infty.$

It is known that for $\ell = 0$, $Y_{00}(x)$ in \eqref{Eq:T-rep1} is a constant
function on $\S^2$, which does not affect the sample path regularity of $T(x)$.
Hence, for simplicity of notation, we will remove the term for $\ell = m =0$ from
\eqref{Eq:T-rep1}  and \eqref{Eq:Cov-T} (i.e., we consider $T(x) - a_{00}
Y_{00}(x)$) throughout the rest of this paper. Furthermore, we shall impose
the following condition on the behavior of the angular power spectrum, which
we consider in every respect as minimal.

\bigskip
\noindent{{\bf Condition (A)}: The random field $T = \{T\left( x\right), x \in \S^2\} $
is zero-mean, Gaussian and isotropic, with angular power spectrum such that:
\begin{equation} \label{reg2}
C_\ell =G\left( \ell \right) \ell^{-\alpha }>0 ,  \ \ \forall \, \ell =1,2, ...,
\end{equation}%
where $\alpha >2$ is a constant and, moreover,  there exists a finite constant
$c_{0}\ge 1,$ such that
\begin{equation*}
c_{0}^{-1}\leq G\left( \ell \right) \leq c_{0}.
\end{equation*}}

The assumption $\alpha >2$ is necessary to ensure that the field has finite
variance (recall the identity $\mathbb{E}\big(T^{2}(x)\big)=\sum_{\ell }\frac{2\ell +1}{%
4\pi }C_{\ell }$). On the other hand, we stress that we are imposing no
regularity condition on the function $G(\ell ),$ on the contrary of much of
the literature on spherical random fields, which typically requires $%
\lim_{\ell \rightarrow \infty }G(\ell )=const.$ or other forms of additional
regularity conditions (see i.e., \cite{bkmpAoS, spalan, MaVa,mayeli}).
We believe that Condition (A) covers the vast majority
of models which seems of interest from a theoretical or applied point of
view; for instance, it fits very well with the theoretical and observational
evidence on Cosmic Microwave Background radiation data (see \cite{dode2004,
Durrer, planck}), which has been one of the main motivating
areas for the analysis of spherical fields over the last decade. \ Most of
our results to follow will depend in a simple analytic way from the value of
the parameter $\alpha$, which we refer to  as the spectral index of $T$.

\subsection{Statement of the Main Results}

To introduce our first main result (on strong local nondeterminism),
we need first to introduce some more notation. In particular, for
$\alpha > 2$, let $\rho _{\alpha }:\mathbb{R}^{+}\rightarrow \mathbb{R}^{+}$
be the continuous function defined by
\begin{equation} \label{def:rho}
\rho _{\alpha }\left( t\right) =\left\{
\begin{array}{ll}
t^{(\alpha - 2)/{2}}, & \hbox{ if}\ 2<\alpha <4,\\
t\sqrt{| \log {t}|}, & \hbox{ if }\ \alpha =4,\\
t,  & \hbox{ if }\ \alpha >4\\
\end{array}%
\right.
\end{equation}%
and $\rho _{\alpha }(0)=0$ for all values of $\alpha .$  In the above
and  in the sequel, $\log x = \ln (x \vee e)$ for all $x > 0$. As we shall
show later, up to a constant factor the functions $\rho _{\alpha }$ can
be related to the canonical (Dudley) metric for the Gaussian processes
to  be investigated; it is important to note the explicit dependence on
the spectral index $\alpha.$ As usual, we take
\begin{equation*}
d_{\S^{2}}(x,y)=\arccos (\left\langle x,y\right\rangle )
\end{equation*}%
as the standard spherical (or geodesic) distance on $\S^{2}.$ The following
result establishes the property of strong local nondeterminism for spherical
Gaussian fields satisfying Condition (A) with $ 2 < \alpha < 4$.

\begin{theorem}
\label{ND}  Let $T = \{T(x), x \in \S^2\}$ be an isotropic Gaussian field that
satisfies Condition (A) with  $ 2 < \alpha  < 4$. There exist
positive and finite constants $c_{2}$ and $\varepsilon_0$ such that for all
integers $n\geq 1$ and all $x_{0},x_{1}, ... ,x_{n}\in \mathbb{S}^{2}$ with
$\min_{1\leq k\leq n} d_{\S^{2}} \big(x_{0},x_{k} \big)\le \varepsilon_0$
we have
\begin{equation}\label{SLND}
{\rm Var}\left( T\left( x_{0}\right) |T\left( x_{1}\right) ,...,T\left(
x_{n}\right) \right) \geq c_{2}\min_{1\leq k\leq n}\rho _{\alpha }\left(
d_{\S^{2}}(x_{0},x_{k})\right)^{2}.
\end{equation}
\end{theorem}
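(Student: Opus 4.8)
The plan is to exploit the standard fact that, for a mean-zero Gaussian vector, the conditional variance equals the squared $L^2(\Omega)$-distance to a linear span, so that with $b=(b_1,\dots,b_n)\in\R^n$,
\[
{\rm Var}\big(T(x_0)\mid T(x_1),\dots,T(x_n)\big)=\inf_{b\in\R^n}\E\Big(T(x_0)-\sum_{k=1}^n b_k T(x_k)\Big)^2 .
\]
Writing $r=\min_{1\le k\le n}d_{\S^2}(x_0,x_k)$, it therefore suffices to bound this infimum below by $c_2\,\rho_\alpha(r)^2$, uniformly in $n$. I would obtain such a bound by a duality (test-function) argument. For a deterministic $f:\S^2\to\R$ of finite energy I will construct a single random variable $Z$ in the first Wiener chaos of the field satisfying $\E(T(x)Z)=f(x)$ for all $x$. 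Then for every $b$, the Cauchy--Schwarz inequality in $L^2(\Omega)$ yields
\[
\Big(f(x_0)-\sum_k b_k f(x_k)\Big)^2=\E\Big(\big(T(x_0)-\sum_k b_k T(x_k)\big)Z\Big)^2\le \E\Big(T(x_0)-\sum_k b_k T(x_k)\Big)^2\,\E\big(Z^2\big),
\]
so if $f(x_0)=1$ and $f$ is supported in the open geodesic ball $B(x_0,r)$ (whence $f(x_k)=0$, since $d_{\S^2}(x_0,x_k)\ge r$), the infimum is at least $1/\E(Z^2)$, for every $n$ and every $b$.

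Next I would carry out the spectral bookkeeping. Using the representation \eqref{Eq:T-rep1} together with \eqref{Def:Cl}, and writing $\hat f_{\ell m}=\int_{\S^2}f\,\overline{Y_{\ell m}}$, the choice $Z=\sum_{\ell,m}C_\ell^{-1}\hat f_{\ell m}\,a_{\ell m}$ (real, since $f$ and $T$ are real) gives
\[
\E\big(T(x)\,Z\big)=\sum_{\ell,m}\hat f_{\ell m}Y_{\ell m}(x)=f(x)\qquad\text{and}\qquad \E\big(Z^2\big)=\sum_{\ell,m}\frac{|\hat f_{\ell m}|^2}{C_\ell}=:\|f\|_{\mathcal H}^2 ,
\]
so $Z$ is exactly the reproducing-kernel representer of point evaluation. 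Under Condition (A) one has $c_0^{-1}\ell^{\alpha}\le C_\ell^{-1}\le c_0\ell^{\alpha}$, hence $\E(Z^2)\asymp\sum_{\ell,m}\ell^{\alpha}|\hat f_{\ell m}|^2$; since the $Y_{\ell m}$ diagonalise the Laplace--Beltrami operator with eigenvalues $\ell(\ell+1)$, this quantity is comparable to the squared Sobolev norm $\|f\|_{H^{\alpha/2}(\S^2)}^2$. The problem is thus reduced to producing, at each scale $r\le\varepsilon_0$, a function $f$ supported in $B(x_0,r)$ with $f(x_0)=1$ and $\|f\|_{H^{\alpha/2}(\S^2)}^2\le C\,r^{\,2-\alpha}$, with $C$ independent of $r$ and, by isotropy, of $x_0$.

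To build $f$ I would transport a fixed smooth bump $\phi$ on $\R^2$, with $\phi(0)=1$ and ${\rm supp}\,\phi\subset\{|u|<1\}$, to the sphere through the exponential chart at $x_0$ and rescale by $r$, i.e. $f(x)=\phi\big(\exp_{x_0}^{-1}(x)/r\big)$. For $r$ small the chart is bi-Lipschitz with uniformly controlled distortion, so the spherical $H^{\alpha/2}$ norm is comparable to the Euclidean one, and the standard dilation scaling in $\R^2$ gives
\[
\|f\|_{H^{\alpha/2}(\S^2)}^2\asymp r^{\,2-\alpha}\,\|\phi\|_{\dot H^{\alpha/2}(\R^2)}^2 .
\]
Combining the displays then yields ${\rm Var}(T(x_0)\mid T(x_1),\dots,T(x_n))\ge c_2\,r^{\,\alpha-2}=c_2\,\rho_\alpha(r)^2$, which is precisely \eqref{SLND} with $\rho_\alpha(r)^2=r^{\alpha-2}$ in the range $2<\alpha<4$.

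The reduction to the projection identity and the Cauchy--Schwarz duality are routine; the main obstacle is the last step, namely the uniform control of the Sobolev norm of the rescaled bump on the curved, non-scale-invariant sphere. Here one must show that only frequencies $\ell\asymp r^{-1}$ contribute, so that the inhomogeneous norm $\sum_{\ell,m}(1+\ell(\ell+1))^{\alpha/2}|\hat f_{\ell m}|^2$ is governed by its homogeneous part and genuinely scales like $r^{\,2-\alpha}$, with constants that do not degenerate as $r\to0$; estimating the curvature distortion of the Laplacian and of the cutoff under rescaling is the delicate point, and is where the smallness threshold $\varepsilon_0$ is used. This is also exactly where the hypothesis $2<\alpha<4$ enters: then $s=\alpha/2\in(1,2)$, a smooth compactly supported bump has finite $\dot H^{s}$ norm and the dilation exponent $2-2s=2-\alpha$ reproduces the sharp rate $\rho_\alpha(r)^2=r^{\alpha-2}$, whereas for $\alpha\ge4$ the same test function would only deliver the weaker bound $r^{\alpha-2}\le r^2$, consistent with $\rho_\alpha$ passing into its Lipschitz regime and with the theorem being stated only for $2<\alpha<4$.
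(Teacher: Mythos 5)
Your duality step coincides with the paper's own argument. The paper (Theorem \ref{C2} and Proposition \ref{Ing}) forms the pairing
\[
I=\sum_{\ell }\sum_{m}\frac{\kappa _{\ell m}(\varepsilon )}{\sqrt{C_{\ell }}}\cdot \sqrt{C_{\ell }}\Big[ Y_{\ell m}(0,0)-\sum_{j=1}^{n}\gamma _{j}Y_{\ell m}(\vartheta _{j},\varphi _{j})\Big]
\]
with the Fourier--Legendre coefficients of a zonal bump $\delta _{\varepsilon }$ supported in the cap of radius $\varepsilon =\min_{k}d_{\S^{2}}(x_{0},x_{k})$, applies Cauchy--Schwarz, uses the support property to evaluate $I=\delta _{\varepsilon }(0,0)\ge \frac{c_{3}}{2}\varepsilon ^{-2}$, and divides by $\sum_{\ell m}\kappa _{\ell m}^{2}(\varepsilon )/C_{\ell }$. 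Your $Z$ is precisely the representer of that pairing, and your $\|f\|_{\mathcal{H}}^{2}=\sum_{\ell m}|\hat{f}_{\ell m}|^{2}/C_{\ell }$ is precisely that denominator; the normalization difference ($f(x_{0})=1$ versus $\delta _{\varepsilon }(0,0)$ of order $\varepsilon ^{-2}$) is immaterial. So the skeleton of your proof is the paper's.

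The genuine divergence --- and the gap --- lies in producing the test function and bounding its weighted spectral norm. You transport a Euclidean bump through the exponential chart and assert that the spectrally defined $H^{\alpha /2}(\S^{2})$ norm of the rescaled bump is of order $r^{2-\alpha }$ uniformly in $r$; you flag this yourself as ``the main obstacle'' and do not prove it. That estimate is the technical core of the paper: its Section 2 constructs $\delta _{\varepsilon }$ as the kernel of $G(\varepsilon \sqrt{-\Delta _{\S^{2}}})$ with $\widehat{G}$ compactly supported, so that the support in the cap follows from Huygens' principle (finite propagation speed of the wave group), while integration by parts yields the coefficient decay $|b_{\ell }(\varepsilon )|\le K_{r}(\varepsilon \ell )^{-r}$, from which $\sum_{\ell }(2\ell +1)b_{\ell }^{2}(\varepsilon )/C_{\ell }=O(\varepsilon ^{-(\alpha +2)})$ is obtained by splitting the sum at $\ell $ of order $\varepsilon ^{-1}$ --- this is exactly your claim that only frequencies $\ell $ of order $r^{-1}$ contribute. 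Your alternative route can be made rigorous: for $s=\alpha /2\in (1,2)$ the dilation identity in $\dot{H}^{s}(\R^{2})$ is exact, the spectral and chart-based Sobolev norms on a compact manifold are equivalent with constants depending only on the atlas, and the inhomogeneous part $\|f\|_{L^{2}}^{2}$, of order $r^{2}$, is dominated by $r^{2-\alpha }$ since $\alpha >2$. But none of this is carried out in your proposal, so as written the proof is incomplete at precisely the point where the paper spends most of its effort; you must either supply the uniform chart-based Sobolev comparison or give a direct decay estimate for the Fourier--Legendre coefficients of your $f$.
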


The proof of Theorem \ref{ND} is presented in Section \ref%
{LocalNondeterminism}. The argument does not seem to work for the critical
case of $\alpha =4$, we expect that (\ref{SLND}) still holds, but a new method
may be needed.

In the following we simply note how the strong local
nondeterminism property can be exploited to develop a number of nontrivial
characterizations for the sample path behaviour of spherical random fields.
Among these characterizations, in this paper we shall focus on the uniform
modulus of continuity, for which we shall establish the
following result, which significantly improves the H\"older continuity established
by Lang and Schwab \cite[Theorem 4.5]{lang1}.

\begin{theorem} \label{MC}
Let $T = \{T(x), x \in \S^2\}$ be an isotropic Gaussian field that
satisfies Condition (A).
\begin{itemize}
\item[(i).] If $2 < \alpha <4$, then  there exists a
positive and finite constant $K_1$ such that, with probability one
\begin{equation}\label{Eq:Umod}
\lim_{\varepsilon \rightarrow 0}\sup_{\substack{ x,y\in \S^{2},  \\ %
d_{\S^{2}}(x,y)<\varepsilon }}\frac{|T(x)-T(y)|}{\rho _{\alpha }\left(
d_{\S^{2}}(x,y)\right) \sqrt{\big| \log \rho _{\alpha }\left(
d_{\S^{2}}(x,y)\right)\big| }} = K_1.
\end{equation}
\item[(ii).] If $\alpha = 4$, then there exists a
positive and finite constant $K_2$ such that, with probability one
\begin{equation} \label{Eq:Umod2}
\lim_{\varepsilon \rightarrow 0}\sup_{\substack{ x,y\in \S^{2},  \\ %
d_{\S^{2}}(x,y)<\varepsilon }}\frac{|T(x)-T(y)|}{ d_{\S^{2}}(x,y) \big|\log
d_{\S^{2}}(x,y)\big| } \le K_2.
\end{equation}
\end{itemize}
\end{theorem}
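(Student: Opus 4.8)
The plan is to establish the exact uniform modulus of continuity in two stages: an upper bound via a chaining/metric-entropy argument, and a matching lower bound (for part (i)) via the strong local nondeterminism property of Theorem \ref{ND}. The first step is to identify the canonical (Dudley) metric of the Gaussian field. Writing $d(x,y) = \big(\E|T(x)-T(y)|^2\big)^{1/2}$, one computes from the covariance expansion \eqref{Eq:Cov-T} that
\begin{equation*}
d(x,y)^2 = 2\sum_{\ell=1}^\infty \frac{2\ell+1}{4\pi}C_\ell\big(1 - P_\ell(\langle x,y\rangle)\big).
\end{equation*}
Using $C_\ell = G(\ell)\ell^{-\alpha}$ together with the standard asymptotics of Legendre polynomials (in particular $1 - P_\ell(\cos\theta) \asymp \min(1,(\ell\theta)^2)$), I would show that $d(x,y) \asymp \rho_\alpha\big(d_{\S^2}(x,y)\big)$ for small geodesic distances, where $\rho_\alpha$ is precisely the function defined in \eqref{def:rho}. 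This matching of the canonical metric to $\rho_\alpha$ is the conceptual heart of why $\rho_\alpha$ appears in the statement, and it is also where the three regimes $\alpha<4$, $\alpha=4$, $\alpha>4$ emerge, according to whether the sum $\sum \ell^{1-\alpha}\min(1,(\ell\theta)^2)$ is dominated by the high-frequency tail, is logarithmically divergent, or converges.

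For the upper bound, I would apply the general theory of suprema of Gaussian processes indexed by a metric space. Since $(\S^2, d)$ has, by the estimate above, metric entropy controlled through $\rho_\alpha$, the Dudley--Fernique metric-entropy bound together with the Borell--TIS concentration inequality yields a uniform modulus of continuity of the form $|T(x)-T(y)| \le K\, \rho_\alpha(d_{\S^2}(x,y))\sqrt{|\log \rho_\alpha(d_{\S^2}(x,y))|}$ almost surely for small distances. To upgrade this from an $O(\cdot)$ bound to the precise $\limsup$ equalling a finite constant, I would use a covering of $\S^2$ by small caps, apply the Gaussian tail estimate on each cap, and a Borel--Cantelli argument along a geometric sequence $\varepsilon_k = 2^{-k}$; this produces the upper bound $K_1 < \infty$ in \eqref{Eq:Umod} and the bound $K_2$ in \eqref{Eq:Umod2}. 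The logarithmic factor in the $\alpha=4$ case requires care because $\rho_4(t) = t\sqrt{|\log t|}$ already carries a logarithm, which is why part (ii) is stated only as an upper bound.

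For the matching lower bound in part (i), this is where Theorem \ref{ND} is essential. The strong local nondeterminism estimate \eqref{SLND} guarantees that conditional variances do not degenerate faster than $\rho_\alpha^2$, so the increments of $T$ over a fine net of points on $\S^2$ behave, after conditioning, like a collection of Gaussian variables each with variance bounded below by a multiple of $\rho_\alpha(d_{\S^2}(x_0,x_k))^2$. I would then invoke the classical argument (as in the Euclidean theory, e.g. \cite{Pitt, X07}) that combines this conditional-independence-type lower bound with a second Borel--Cantelli argument to show that the $\limsup$ cannot be smaller than some positive constant, giving $K_1 > 0$. The existence of an exact constant (rather than merely two-sided bounds) then follows from a zero-one law for the $\limsup$ of the normalized increments, which is measurable with respect to the tail $\sigma$-field of the Gaussian coefficients $\{a_{\ell m}\}$.

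The main obstacle I anticipate is the sharp two-sided estimate $d(x,y)\asymp \rho_\alpha(d_{\S^2}(x,y))$, and in particular controlling the Legendre sum uniformly in the transition regions where $\ell\theta \asymp 1$. The oscillatory behaviour of $P_\ell$ away from $\theta=0$ must be handled carefully so that the contributions do not accidentally cancel or blow up; obtaining clean constants here, valid uniformly for all small $\theta$, is delicate and is the step on which both the upper and lower bounds ultimately rest. The spherical geometry also forces one to replace Euclidean translation-invariance arguments with rotation-invariance of the field, which complicates the net-and-covering constructions but does not change the essential structure of the proof.
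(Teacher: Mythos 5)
Your proposal follows essentially the same route as the paper: a two-sided comparison of the canonical metric with $\rho_\alpha$, a metric-entropy/Borell--TIS upper bound, a lower bound obtained by conditioning along a fine net of points via the strong local nondeterminism of Theorem \ref{ND} together with Borel--Cantelli, and a Marcus--Rosen/Kolmogorov zero--one law to upgrade the two-sided bounds to an exact constant $K_1$. The only divergence is technical and harmless: the paper proves the variogram estimate (its Lemma \ref{C1}) through the Mehler--Dirichlet representation and polylogarithm expansions rather than through the pointwise bound $1-P_\ell(\cos\theta)\asymp\min\{1,(\ell\theta)^2\}$ you propose, which yields the same two-sided conclusion.
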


The proof of Theorem \ref{MC} will be given in Section \ref{ModulusContinuity}.
In the following, we provide some remarks.

\begin{itemize}
\item In terms of the geodesic distance,
the results (\ref{Eq:Umod}) and (\ref{Eq:Umod2})  can be clearly written as
\begin{equation*}
\begin{split}
\lim_{\varepsilon \rightarrow 0}\sup_{\substack{ x,y\in \S^{2}, \\ %
d_{\S^{2}}(x,y)<\varepsilon }}\frac{|T(x)-T(y)|}{d_{\S^{2}}(x,y)^{(\alpha
-2)/2}\sqrt{\big|\log d_{\S^{2}}(x,y)\big|}} &= \sqrt{\frac{\alpha -2} 2}\, K_1,
 \hbox{ for } \, 2<\alpha <4.
\end{split}
\end{equation*}%

\item It is important to note the fractal behaviour that occurs for $2<\alpha <4,$
when the modulus of continuity decays slower than linearly with respect to the
angular distance (hence the sample function $T(x)$ is nondifferentiable). We
note that this range of values of $\alpha$ is typical for many applied fields, for
instance for Cosmic Microwave Background data $\alpha $ is known to be
very close to $2,$ from theoretical arguments and from experimental data
(see e.g., \cite{planck}).

\item For the case of $\alpha = 4$, \eqref{Eq:Umod2} implies that the sample
function  $T(x)$ is almost Lipschitz. We believe the equality in \eqref{Eq:Umod2}
actually holds and the sample function presents subtle fractal properties.
However, we have not been able to prove these results, due to the unsolved
case in Theorem \ref{ND}.
\end{itemize}

Next we consider the case of  $\alpha >4$. Let  $k \ge  1$ be the unique integer
such that $2+ 2k < \alpha < 4 + 2k$. It follows from Lang and Schwab
\cite[Theorem 4.6]{lang1} that $T = \{T(x), x \in \S^2\}$ has a modification,
still denoted by $T$, such that its sample function is almost surely $k$-times
continuously differentiable. Moreover, the $k$-th (partial) derivatives of $T(x)$
are H\"older continuous on  $\S^2$ with exponent $\gamma <
\frac{\alpha - 2 }2 - k$.


In the following, we adapt the approach of Lang and Schwab \cite{lang1}
(see also \cite{lang2}) to study the regularity properties of higher-order
derivatives of $T$ based on pseudo-differential operators, as described in
the classical monograph \cite{Taylor}. In particular, for real $k\in \mathbb{R}$
introduce $(1-\Delta_{\S^{2}})^{k/2}$ as the pseudo-differential operator
whose action on functions $T(\cdot) \in L^{2}(\S^{2})$ is defined by
\begin{equation}\label{Tk}
(1-\Delta_{\S^{2}})^{k/2}T :=\sum_{\ell m}a_{\ell m} (1+\ell (\ell
+1))^{k/2}Y_{\ell m},
\end{equation}%
provided the right-hand side converges in $L^2(\Omega\times \S^2)$. In the above,
$\{a_{\ell m}\}$ is the same sequence of random variables as in \eqref{Eq:T-rep1},
and $\Delta _{\S^{2}}$ is the spherical Laplacian, also called Laplace-Beltrami
operator which, in spherical coordinates $(\vartheta ,\varphi ),$ is defined by
$0\leq \vartheta \leq \pi ,$ $0\leq \varphi <2\pi ,$
\begin{equation}\label{Lap}
\Delta_{\S^{2}}=\frac{1}{\sin \vartheta }\frac{\partial }{\partial \vartheta }%
\left\{ \sin \vartheta \frac{\partial }{\partial \vartheta }\right\} +\frac{1%
}{\sin ^{2}\vartheta }\frac{\partial ^{2}}{\partial \vartheta ^{2}}.
\end{equation}
Recall that for every $x \in \S^2$, it can be written as $x = (\sin \vartheta \cos \varphi,
\sin \vartheta \sin \varphi, \cos \vartheta)$. In this paper, with slight abuse of notation,
we always identify the Cartesian and angular coordinates of the point $x \in \S^2$.

It is shown in \cite[Chapter XI]{Taylor} that the Sobolev space $\mathcal{W}^{k,2}(\S^2)$
of  functions with square-integrable $k $-th derivatives can be viewed as the image of
$L^2(\S^2)$ under the operator $(1-\Delta_{\S^{2}})^{-k/2}$; this and related property
are exploited by Lang and Schwab \cite{lang1} to prove their Theorem 4.6 on regularity
of higher-order derivatives. More precisely,  consider the Gaussian random field
$T^{(k)}= \{T^{(k)}(x),  x \in \S^2\}$  defined by
\begin{equation*}
T^{(k)}:=(1-\Delta _{\S^{2}})^{k/2}T.
\end{equation*}%
Lang and Schwab \cite{lang1} study the almost-sure H\"older continuity of $T^{(k)}$.
We are able to improve their results by considering the exact modulus of continuity, for
which we provide the following result.

\begin{theorem}\label{MC-2}
If in Condition (A),  $2 + 2k < \alpha \le 4+2k$ for some integer $k \ge 1$, then
$T^{(k)} = \{T^{(k)}(x), x \in \S^2\}$  satisfies the following uniform modulus
of continuity:
\begin{itemize}
\item[(i).] If $2 +2k < \alpha <4 + 2k$, then  there exists a
positive and finite constant $K_3$ such that
\begin{equation*}
\lim_{\varepsilon \rightarrow 0}\sup_{\substack{ x,y\in \S^{2},  \\ %
d_{\S^{2}}(x,y)\le \varepsilon }}\frac{|T^{(k)}(x)-T^{(k)}(y)|}{\rho _{\alpha-2k }\left(
d_{\S^{2}}(x,y)\right) \sqrt{\big|\log \rho _{\alpha -2k}\left(
d_{\S^{2}}(x,y)\right) \big|}}=K_3, \quad \hbox{a.s.}
\end{equation*}
\item[(ii).] If $\alpha = 4 + 2k$, then there exists a
positive and finite constant $K_4$ such that
\begin{equation*} \label{Eq:Umod4}
\lim_{\varepsilon \rightarrow 0}\sup_{\substack{ x,y\in \S^{2},  \\ %
d_{\S^{2}}(x,y)<\varepsilon }}\frac{|T^{(k)}(x)-T^{(k)}(y)|}{ d_{\S^{2}}(x,y) \big|\log
d_{\S^{2}}(x,y)\big| } \le K_4, \quad \hbox{a.s.}
\end{equation*}
\end{itemize}
\end{theorem}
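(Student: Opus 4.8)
The plan is to reduce Theorem \ref{MC-2} to Theorem \ref{MC} by observing that $T^{(k)}$ is itself an isotropic Gaussian field on $\S^2$ that satisfies Condition (A), but with spectral index shifted from $\alpha$ to $\alpha-2k$. Since the hypothesis $2+2k<\alpha\le 4+2k$ is equivalent to $2<\alpha-2k\le 4$, the two cases (i) and (ii) of Theorem \ref{MC-2} correspond exactly to the two cases of Theorem \ref{MC} applied to $T^{(k)}$ with index $\alpha':=\alpha-2k$, and the stated conclusions follow by direct substitution of $\rho_{\alpha-2k}$ for $\rho_\alpha$.

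First I would identify the angular power spectrum of $T^{(k)}$. From the definition \eqref{Tk}, the spectral coefficients of $T^{(k)}$ are $a_{\ell m}^{(k)}=(1+\ell(\ell+1))^{k/2}a_{\ell m}$, so by \eqref{Def:Cl} its angular power spectrum is
\[
C_\ell^{(k)}=(1+\ell(\ell+1))^k\,C_\ell=G(\ell)\,(1+\ell(\ell+1))^k\,\ell^{-\alpha},\qquad \ell\ge 1.
\]
Writing $(1+\ell(\ell+1))^k=\ell^{2k}\,(1+\ell^{-1}+\ell^{-2})^k$ and setting $\tilde G(\ell):=G(\ell)\,(1+\ell^{-1}+\ell^{-2})^k$, one has $C_\ell^{(k)}=\tilde G(\ell)\,\ell^{-(\alpha-2k)}$. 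Since $1\le (1+\ell^{-1}+\ell^{-2})^k\le 3^k$ for all $\ell\ge 1$ and $c_0^{-1}\le G(\ell)\le c_0$, the factor $\tilde G$ is bounded above and below by positive finite constants; hence $T^{(k)}$ satisfies Condition (A) with spectral index $\alpha'=\alpha-2k$.

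Next I would check that $T^{(k)}$ is a well-defined, zero-mean, isotropic Gaussian field. The series in \eqref{Tk} converges in $L^2(\Omega\times\S^2)$ precisely when $\sum_\ell (2\ell+1)\,C_\ell^{(k)}<\infty$, and since $(2\ell+1)C_\ell^{(k)}\asymp \ell^{\,1-\alpha+2k}$ this holds if and only if $\alpha-2k>2$, i.e. under the standing hypothesis; Gaussianity and the zero mean are inherited from $\{a_{\ell m}\}$ by linearity. Isotropy follows because the Laplace--Beltrami operator $\Delta_{\S^2}$ commutes with rotations, so $(1-\Delta_{\S^2})^{k/2}(T\circ g)=T^{(k)}\circ g$ for every $g\in SO(3)$; combined with $T\circ g\stackrel{d}{=}T$ this gives $T^{(k)}\circ g\stackrel{d}{=}T^{(k)}$. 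As with $T$, the constant $\ell=0$ term is irrelevant to the modulus of continuity and may be discarded.

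With these verifications in place, Theorem \ref{MC} applies verbatim to $T^{(k)}$ with index $\alpha'=\alpha-2k\in(2,4]$: part (i) (case $2<\alpha'<4$) yields conclusion (i) here with $\rho_{\alpha'}=\rho_{\alpha-2k}$, and part (ii) (case $\alpha'=4$) yields conclusion (ii). The argument is therefore essentially bookkeeping, and the only point requiring genuine care is the verification that $T^{(k)}$ is isotropic and that its spectrum matches Condition (A) after the index shift; all of the analytic content --- the strong local nondeterminism and the upper/lower modulus bounds --- is already contained in Theorems \ref{ND} and \ref{MC}. Thus the main difficulty has effectively been dispatched in the earlier sections, and no new obstacle arises here.
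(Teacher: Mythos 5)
Your proposal is correct and follows essentially the same route as the paper: the authors likewise observe that $T^{(k)}$ is an isotropic Gaussian field with angular power spectrum $\widetilde{C}_\ell = C_\ell(1+\ell(\ell+1))^k$ satisfying Condition (A) with spectral index $\alpha-2k\in(2,4]$, and then invoke Theorems \ref{ND} and \ref{MC}. Your verification of isotropy and $L^2$-convergence is in fact slightly more explicit than the paper's.
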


\subsection{Plan of the Paper}

The plan of the paper is as follows. In Section \ref{TechnicalTools} we
introduce some auxiliary tools that will be instrumental for our proofs
to follow; in particular, a careful analysis of the variogram/covariance
function on very small scales, and the construction of the so-called
spherical bump function, i.e. a compactly supported function on the
sphere satisfying some required smoothness conditions. The latter
construction builds upon ideas discussed by Geller and Mayeli
\cite{gm1,gm2} in the framework of spherical wavelets. In Section
\ref{LocalNondeterminism}, we exploit these results to establish the
property of strong local nondeterminism  for a large class of isotropic
spherical Gaussian fields. In Section  \ref{ModulusContinuity}, by applying
Gaussian techniques and strong local  nondeterminism we prove Theorem
\ref{MC} on the exact uniform modulus of continuity; while an extension
to higher-order derivatives is discussed in  Section \ref{HigherDerivatives}.
Some auxiliary results are collected in the Appendix.

\section{Technical Tools \label{TechnicalTools}}

\subsection{The Variogram}

It is well-known that, for the investigation of sample properties of Gaussian
field $T= \{T(x), \, x \in \S^2\}$, it is important to introduce the canonical metric%
\begin{equation*}
d_{T}(x,y)=\sqrt{\mathbb{E} \big(\left\vert T\left( x\right) -T\left(y\right)
\right\vert ^{2} \big)},
\end{equation*}%
see for instance \cite{RFG, MRbook} or any other monograph on the modern
theory of Gaussian processes. The square of the canonical metric is also
known as the variogram of $T$. Our first technical result is a careful investigation
on the behaviour of this metric for pairs of points that are very close in the
spherical distance $d_{\S^{2}}(\cdot,\cdot);$ more precisely, we have the
following upper and lower bounds, in terms of the function $\rho_\alpha$
which was introduced in (\ref{def:rho}).

\begin{lemma}\label{C1}
Under Condition (A), there exist constants $1 \le c_{1} < \infty$ and
$0<\varepsilon <1,$ such that for all $x, y \in \S^2$ with $d_{\S^{2}}(
x,y) \le \varepsilon,$ we have
\begin{equation}\label{Eq:Vario-bounds}
c_{1}^{-1}\rho _{\alpha }^2\left( d_{\S^{2}}\left(x,y\right)
\right) \leq d_{T}^{2}(x,y)\leq c_{1}\rho _{\alpha }^2\left(
d_{\S^{2}}\left( x,y\right) \right),
\end{equation}%
where $\rho _{\alpha }\left(\cdot \right) :[0,\pi ]\rightarrow \mathbb{R}%
^{+}$ is defined in \eqref{def:rho}.
\end{lemma}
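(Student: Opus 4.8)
The plan is to begin from the spectral series for the variogram. Since $T$ is zero-mean and isotropic, combining \eqref{Eq:Cov-T} with \eqref{Def:Cl} and writing $\theta := d_{\S^2}(x,y)$, $\cos\theta = \langle x,y\rangle$, one has
\[
d_T^2(x,y) = \mathbb{E}(|T(x)-T(y)|^2) = \frac{1}{2\pi}\sum_{\ell=1}^\infty (2\ell+1)\,C_\ell\,(1 - P_\ell(\cos\theta)),
\]
a convergent series (by $\alpha>2$) with nonnegative terms, since $C_\ell>0$ and $P_\ell(\cos\theta)\le P_\ell(1)=1$. Using Condition (A) in \eqref{reg2} to replace $C_\ell$ by $\ell^{-\alpha}$ up to the fixed factors $c_0^{\pm1}$, and $2\ell+1\asymp\ell$, the whole statement reduces to two-sided bounds of the correct order for
\[
S(\theta) := \sum_{\ell=1}^\infty \ell^{\,1-\alpha}(1 - P_\ell(\cos\theta)).
\]

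Both directions rest on a pair of pointwise estimates for $1 - P_\ell(\cos\theta)$. For the upper bound I would write $1 - P_\ell(\cos\theta) = \int_{\cos\theta}^1 P_\ell'(t)\,dt$ and use the classical inequality $\sup_{t\in[-1,1]}|P_\ell'(t)| = P_\ell'(1) = \tfrac{\ell(\ell+1)}{2}$ together with $1-\cos\theta\le\theta^2/2$ to get $1 - P_\ell(\cos\theta)\le C\,\ell^2\theta^2$; combined with the trivial bound $1 - P_\ell(\cos\theta)\le 2$ this gives $1-P_\ell(\cos\theta)\le C\min(\ell^2\theta^2,1)$. For the lower bound I would invoke Hilb's asymptotic formula $P_\ell(\cos\theta) = \sqrt{\theta/\sin\theta}\,J_0((\ell+\tfrac12)\theta) + \mathrm{error}$ together with $J_0(z) = 1 - z^2/4 + O(z^4)$ to produce constants $A>0$ and $c>0$ with $1 - P_\ell(\cos\theta)\ge c\,\ell^2\theta^2$ whenever $\ell\theta\le A$. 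These two facts are most cleanly isolated as an auxiliary lemma (e.g.\ in the Appendix).

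Given these estimates, both bounds follow by splitting the sum at $\ell\sim1/\theta$. For the upper bound, $S(\theta)\le C(\theta^2\sum_{\ell\le1/\theta}\ell^{3-\alpha} + \sum_{\ell>1/\theta}\ell^{1-\alpha})$: the tail is $\asymp\theta^{\alpha-2}$ since $\alpha>2$, while the head is $\asymp\theta^{\alpha-2}$ for $2<\alpha<4$, $\asymp\theta^2|\log\theta|$ for $\alpha=4$, and $\asymp\theta^2$ for $\alpha>4$; in each case the dominant term is exactly $\rho_\alpha^2(\theta)$ from \eqref{def:rho}. For the lower bound it suffices to keep the nonnegative terms with $\ell\le A/\theta$ and apply $1-P_\ell(\cos\theta)\ge c\,\ell^2\theta^2$, giving $S(\theta)\ge c\,\theta^2\sum_{1\le\ell\le A/\theta}\ell^{3-\alpha}$; this remaining sum is $\asymp\theta^{\alpha-4}$ for $2<\alpha<4$, $\asymp|\log\theta|$ for $\alpha=4$, and bounded below by a positive constant for $\alpha>4$, so $S(\theta)$ is bounded below by a constant multiple of $\theta^{\alpha-2}$, $\theta^2|\log\theta|$, and $\theta^2$, respectively, once more matching $\rho_\alpha^2(\theta)$. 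Choosing $\varepsilon$ small enough that $A/\theta\ge1$ and that Hilb's estimate applies yields \eqref{Eq:Vario-bounds}.

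The main obstacle is the \emph{uniform} lower estimate $1-P_\ell(\cos\theta)\ge c\,\ell^2\theta^2$ on the entire range $\ell\theta\le A$: a fixed-$\ell$ Taylor expansion around $\theta=0$ does not suffice, because for $2<\alpha<4$ the lower-bound sum is dominated by the largest admissible frequencies $\ell\sim1/\theta$, so one truly needs control of $1-P_\ell(\cos\theta)$ that is uniform in $\ell$ as $\theta\to0$. This is precisely the Mehler--Heine / Hilb regime of the Legendre polynomials, and uniformly handling the error term there is the single delicate point; the upper bound and all the summations are routine.
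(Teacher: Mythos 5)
Your argument is correct in outline, but it takes a genuinely different route from the paper. The paper also reduces to the model sum $Q_\alpha(\theta)=\sum_\ell \ell^{-\alpha}(\ell+\tfrac12)(1-P_\ell(\cos\theta))$ via Condition (A) and $|P_\ell|\le 1$, but it then quotes Lemma \ref{SumPoly} from the Appendix, which derives a full asymptotic expansion of $\sum_\ell \ell^{-s}P_\ell(\cos\vartheta)$ as $\vartheta\to 0$ (with explicit leading constants, including the logarithmic term at $s=3$) by inserting the Mehler--Dirichlet representation \eqref{Eq:Pl}, summing over $\ell$ first to produce the polylogarithm $Li_s(e^{i\psi})$, and expanding $Li_s$ near its singularity. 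Your approach replaces this exact-asymptotics machinery with the two pointwise estimates $1-P_\ell(\cos\theta)\le C\min(\ell^2\theta^2,1)$ and $1-P_\ell(\cos\theta)\ge c\,\ell^2\theta^2$ for $\ell\theta\le A$, plus a split of the sum at $\ell\sim 1/\theta$; this is more elementary and yields exactly the order-of-magnitude statement the lemma needs, while the paper's method buys sharp constants and higher-order terms (at the cost of a long Appendix computation). Your case analysis of the head and tail sums in the three regimes $2<\alpha<4$, $\alpha=4$, $\alpha>4$ is correct and matches $\rho_\alpha^2$ from \eqref{def:rho}.

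The one point you flag as delicate does need a little more than you say. Hilb's formula with the error bound $R_\ell(\theta)=O(\theta^2)$ valid for $\theta\lesssim 1/\ell$ gives $1-P_\ell(\cos\theta)=\tfrac14(\ell+\tfrac12)^2\theta^2(1+o(1))+O(\theta^2)$, and the $O(\theta^2)$ error is \emph{comparable to the main term} when $\ell$ is bounded; so the uniform lower bound $1-P_\ell(\cos\theta)\ge c\,\ell^2\theta^2$ on $\{\ell\theta\le A\}$ does not follow from Hilb alone. The standard fix is a two-regime argument: for $\ell\ge \ell_0$ with $\ell_0$ large enough that $\ell_0^2/8$ exceeds the implied constant in $R_\ell(\theta)=O(\theta^2)$, Hilb plus $1-J_0(z)\ge 3z^2/16$ for $z\le A$ gives the bound; for the finitely many $1\le \ell<\ell_0$, the fixed-$\ell$ Taylor expansion $1-P_\ell(\cos\theta)=\tfrac{\ell(\ell+1)}{4}\theta^2+O_\ell(\theta^4)$ does the job after shrinking $\varepsilon$. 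With that half-line addition your proof is complete; note also that the large-$\ell$ regime is genuinely needed only for $\alpha\le 4$ (for $\alpha>4$ the $\ell=1$ term already gives the lower bound $\asymp\theta^2$), whereas for $2<\alpha<4$ one could even get away with the weaker statement that $1-P_\ell(\cos\theta)$ is bounded below by a positive constant for $\ell\theta$ in a fixed window $[a,A]$.
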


\begin{proof}
From \eqref{Eq:Cov-T}, it is readily seen that
\begin{equation}
d_{T}^{2}(x,y) = \mathbb{E}\big(\left\vert T\left( x\right) -T\left(
y\right) \right\vert ^{2}\big) 
=\sum_{\ell =1}^{\infty }C_{\ell }\frac{2\ell +1}{2\pi }\big( 1-P_{\ell
}\left( \cos \theta \right) \big),
\end{equation}%
where we write for notational convenience $\theta =\theta_{xy}=
d_{\S^{2}}(x,y).$ Let
\begin{equation*}
Q_{\alpha }\left( \theta \right) =\sum_{\ell =1}^{\infty }\ell ^{-\alpha
}\Big( \ell +\frac{1}{2}\Big) \big( 1-P_{\ell }\left( \cos \theta
\right) \big).
\end{equation*}%
Schoenberg's theorem in  \cite{schoenberg1942} implies that, for every
 $\ell \ge 1$, $P_\ell(\langle x, y\rangle)$ is a covariance function on $\S^2$.
The Cauchy-Schwarz inequality gives $|P_{\ell }\left( \cos \theta
\right) |\le P_{\ell }\left( 1\right) =1$. Hence, it follows from Condition (A)
that
\begin{equation}
\frac{c_{0}^{-1}}{\pi }Q_{\alpha }\left( \theta \right) \leq d_{T
}^{2}(x,y)\leq \frac{c_{0}}{\pi }Q_{\alpha }\left( \theta \right).
\label{BoundMC}
\end{equation}%
The statement is then derived by exploiting Lemma \ref{SumPoly} 
in the Appendix, which provides a full characterization on the small scale
behaviour of $Q_\alpha \left( \theta \right) $ as
$\theta \rightarrow 0.$
\end{proof}

\begin{remark}
Anticipating some results to follow, it is important to stress the phase
transition that occurs in the behaviour of the canonical metric as a
function of $\alpha .$ For $\alpha > 4,$ the canonical metric is
proportional to the standard geodesic distance; for $2<\alpha <4,$ on the
contrary, the ratio between geodesic and canonical distance diverges on
small scales and fractal behaviour occurs. The case of $\alpha = 4$ is, in
some sense, critical and an extra logarithmic factor appears in the
bounds for the variogram in Lemma \ref{C1}.
\end{remark}

\subsection{The Construction of the Spherical Bump Function}

In this section, we work with spherical coordinates $(\vartheta ,\varphi ),$
$0\leq \vartheta \leq \pi ,$ $0\leq \varphi <2\pi ,$ and we review the
construction of a family of zonal functions $\delta _{\varepsilon}:
\S^{2}\rightarrow \mathbb{R},$ $\varepsilon >0,$ which shall vanish outside
a spherical cap around the North Pole $\vartheta =\varphi =0$ (we recall
that a zonal function satisfies by definition the identity $\delta
_{\varepsilon }(\vartheta ,\varphi )=\delta _{\varepsilon }(\vartheta
,\varphi ^{\prime })$ for all $\varphi $,$\varphi ^{\prime }\in \lbrack
0,2\pi )$)$.$ The construction follows a proposal by Geller and Mayeli
(\cite{gm1}, Lemma 4.1, pages 16-17), see also \cite{gm2}; we introduce
some minimal modifications, to ensure a suitable rate of decay in the
spherical harmonic coefficients. More precisely, we shall show that for
all $\varepsilon >0,$ there exists a zonal function
\begin{equation}
\delta _{\varepsilon }(\vartheta ,\varphi ):=\sum_{\ell =1}^{\infty }b_{\ell
}(\varepsilon )\frac{2\ell +1}{4\pi }P_{\ell }(\cos \vartheta )=\sum_{\ell
=1}^{\infty }\sum_{m=-\ell }^{\ell }\kappa _{\ell m}(\varepsilon )Y_{\ell
m}(\vartheta ,\varphi )  \label{prop0}
\end{equation}%
such that for some positive and finite constants $c_2$ and $c_3$,
we have
\begin{equation} \label{prop1}
\begin{split}
&\varepsilon ^{2}\delta _{\varepsilon }(\vartheta ,\varphi ) \leq  c_2 \  \ \hbox{
for all } \ 0\leq \vartheta \leq \pi,  \  0\leq \varphi <2\pi; \\
& \delta_{\varepsilon }(\vartheta ,\varphi )=0 \hbox{ for all  }\vartheta >\varepsilon
\end{split}
\end{equation}
and
\begin{equation} \label{prop2}
\delta_{\varepsilon }(0,0) \sim c_3 \varepsilon ^{-2} \ \hbox{ as } \varepsilon \to 0.
\end{equation}%
Moreover the coefficients $\left\{ b_{\ell }(\varepsilon ),\kappa _{\ell
m} (\varepsilon )\right\} $ can be taken such that they satisfy
\begin{equation}
\begin{split}
&\left\vert b_{\ell }(\varepsilon )\right\vert \leq c_4, \quad \ \  \kappa _{\ell m}
(\varepsilon )= 0 \hbox{ for } m \ne 0, \hbox{ and }\\
&\left\vert \kappa _{\ell 0}(\varepsilon )\right\vert \leq
c_5\, \sqrt{2\ell +1}   \label{prop3}
\end{split}
\end{equation}
for all integers $\ell \ge 1$, where $c_4$ and $c_5$ are  positive and finite
constants.

It is natural to label $\delta_{\varepsilon }(\cdot,\cdot)$ a \emph{spherical
bump function,} in analogy with the analogous constructions on the
Euclidean domains.  On the other hand, up to a different normalization
factor the function $\delta_{\varepsilon }(\cdot, \cdot)$ is just a special case
of the so-called Mexican needlet frame by \cite{gm1}, in the special case
where the latter has bounded support in the real domain. We hence follow
as much as possible the notation by these authors.

In particular, we choose a function  $\widehat{G}(\cdot): \mathbb{R
\rightarrow R}$ such that it satisfies the following conditions:
\begin{itemize}
\item[(i).] $supp\widehat{G}(\cdot)\subseteq (-1,1)$,
\item[(ii).] It is piecewise continuously differentiable up to order $M$, where
$M$ is large enough, and
\item[(iii).] Its inverse Fourier transform $G$ satisfies $ 0 < \int_0^\infty G(u) u du
< \infty.$
\end{itemize}
For example, we can take  $\widehat{G}(\cdot) = p\star p (\cdot)$, where
$p(s) = \max\{0, 1 - 2|s|\}$. Then  $\widehat{G}(\cdot)$ is piecewise smooth
and its inverse Fourier transform is $G(u) = (\frac{2}{\pi})^2(1 -\cos(u/2))^2 u^{-4}$.
Functions $G(u)$ with faster decay rate of as $u \to \infty$ can be constructed
by convoluting more times.

As in Geller and Mayeli \cite{gm1}, we consider the operator $G(\varepsilon
\sqrt{-\Delta_{\S^{2}}}):L^{2}(\S^{2})\rightarrow L^{2}(\S^{2})$ defined by%
\begin{equation*}
G(\varepsilon \sqrt{-\Delta _{\S^{2}}}):= \int_{-\infty }^{\infty }\widehat{G}%
(s)\exp (-is\varepsilon \sqrt{-\Delta _{\S^{2}}})\,ds;
\end{equation*}%
recall that $\Delta _{\S^{2}}$ is the spherical Laplacian in (\ref{Lap}).
The action of this operator is described as usual by means of the
corresponding kernel; i.e., for any $f\in L^{2}(\S^{2})$ we have%
\begin{equation*}
G(\varepsilon \sqrt{-\Delta_{\S^{2}}})f(\cdot):=\int_{\S^{2}}K_{\varepsilon
}(x,\cdot)f(x)\,dx,
\end{equation*}%
where%
\begin{equation}\label{Eq:K}
\begin{split}
K_{\varepsilon }(x,y) &:=\sum_{\ell =1}^{\infty }G \big(\varepsilon \sqrt{%
-\lambda _{\ell }} \big)\frac{2\ell +1}{4\pi }P_{\ell }(\left\langle
x,y\right\rangle ) \\
&=\sum_{\ell =1}^{\infty }\left\{ \int_{-\infty }^{\infty }\widehat{G}%
(s)\exp (-is\varepsilon \sqrt{-\lambda _{\ell }})ds\right\} \frac{2\ell +1}{%
4\pi }P_{\ell }(\left\langle x,y\right\rangle ).
\end{split}
\end{equation}%
In the above, $\left\{ \lambda _{\ell },\ \ell =1,2, ... \right\} $ are the
eigenvalues of  $\Delta _{\S^{2}}$,  i.e., $\lambda _{\ell }=-\ell (\ell +1)$,
\begin{equation*}
\Delta _{\S^{2}}Y_{\ell m}= \lambda _{\ell }Y_{\ell m}
\end{equation*}%
for $\ell = 1,2, \ldots$ and $m = -\ell, \ldots, \ell.$; see i.e, \cite{MPbook}, Chapter 3.

Under this assumptions, we take $x=N=(0,0)$ (the ``North Pole"), $%
y=(\vartheta ,\varphi )$ an arbitrary point on the sphere, and define
\begin{equation*}
\delta _{\varepsilon }(\vartheta ,\varphi ):= K_{\varepsilon }(N,y).
\end{equation*}%
Then the first inequality in (\ref{prop1})  follows from an application of Lemma 4.1
in \cite{gm1} to the case of  ${\mathbf M}=\S^2$ (hence  $n = 2$, $d(x, y) =
d_{\S^2}(N, y) =\vartheta$), $t = \varepsilon$ and $j, k, N = 0$. The second statement
in (\ref{prop1}), namely, $supp \delta_{\varepsilon } \subseteq \{(\vartheta, \varphi):
\vartheta \le \varepsilon\}$ follows from Huygens' principle as in the proof of Lemma
4.1 in  \cite[page 911]{gm1}.

To verify  (\ref{prop2}), we use the definition of $K$ in (\ref{Eq:K}) to verify that
as $\varepsilon \to 0$,
\[
\begin{split}
\delta _{\varepsilon }(0 ,0) &= \sum_{\ell =1}^\infty G\big(\varepsilon \sqrt{\ell(\ell + 1)}\big)
\frac{2 \ell + 1} {\sqrt{4\pi}} \\
&\sim \frac 1 {2 \pi}\int_0^\infty G(\varepsilon u) u du = c_3 \varepsilon^{-2},
\end{split}
\]
with $c_3 = (2 \pi)^{-1}\int_0^\infty G(u)u du$ which is positive  and finite.

Now we define
\begin{equation*}
b_{\ell }(\varepsilon ) :=\int_{-\infty }^{\infty }\widehat{G}(s)\exp
(-is\varepsilon \sqrt{\lambda _{\ell }})ds,
\end{equation*}
\begin{equation*}
\kappa _{\ell m}(\varepsilon ) =\left\{
\begin{array}{ll}
\sqrt{\frac{2\ell +1} {4\pi }}\, b_{\ell } (\varepsilon ), \  &\hbox{ if } m=0, \\
0 ,  &\hbox{ otherwise.}%
\end{array}%
\right.
\end{equation*}%
Then $|b_{\ell }(\varepsilon )| \le c$ for some constant $c$, and
$\{\kappa_{\ell m}(\varepsilon )\}$ satisfies the properties in \eqref{prop3}.
Moreover, by appealing to the standard identities%
\begin{equation*}
\frac{2\ell +1}{4\pi }P_{\ell }(\left\langle x,y)\right\rangle =\sum_{ m = \ell}^\ell
\overline{Y}_{\ell m}(x)Y_{\ell m}(y), \
\end{equation*}
\begin{equation*}
Y_{\ell m}(0,0) =\left\{ \begin{array}{cc}
\sqrt{\frac{2\ell +1}{4\pi }} , \  &\hbox{ for } m=0, \\
0, \  &\hbox{ otherwise,}%
\end{array}%
\right.
\end{equation*}%
we see that $ \delta _{\varepsilon }(\vartheta ,\varphi )$ can be written as
\begin{equation*}
\delta _{\varepsilon }(\vartheta ,\varphi )=\sum_{\ell =1}^{\infty }b_{\ell
}(\varepsilon )\frac{2\ell +1}{4\pi }P_{\ell }(\cos \vartheta )= \sum_{\ell=1}^\infty
\sum_{m= - \ell}^\ell \kappa _{\ell m}(\varepsilon )Y_{\ell m}(\vartheta ,\varphi ),
\end{equation*}%
which gives the desired representation in (\ref{prop0}).

We end this section with some further properties of the spherical bump function
$ \delta _{\varepsilon }(\vartheta ,\varphi )$ and its coefficient which will be used
in the proof of Theorem \ref{ND}  in Section \ref{LocalNondeterminism}.

To get information on the decay rate of $|b_{\ell }(\varepsilon )|$ as $\ell$
increases, we use  integration by parts $r$ times ($r \le M$) to get
\begin{equation*}
b_{\ell }(\varepsilon )=\int_{-\infty }^{\infty }\widehat{G}(s)\exp
(-is\varepsilon \sqrt{\lambda _{\ell }})ds  =\int_{-\infty }^{\infty }\widehat{ G}^{(r)}(s)
\frac{\exp (-is\varepsilon
\sqrt{\lambda _{\ell }})}{\left\{ i\varepsilon \sqrt{\lambda _{\ell }}\right\} ^{r}}ds.
\end{equation*}%
Hence for any $r \le M$,
\begin{equation}\label{prop4}
\big|b_\ell (\varepsilon) \big| \le  \frac{K_{r}} {\varepsilon^{r}\ell ^{r}},
\end{equation}
where
\begin{equation*}
K_{r}:=\sup_{-1\leq s\leq 1}\big\vert \widehat{G}^{(r)}(s)\big\vert < \infty.
\end{equation*}%
Note that, by  (\ref{prop2}), there exists a constant $\varepsilon_0 > 0$ such that
\begin{equation} \label{prop5}
\sum_{\ell =1}^{\infty }b_{\ell }(\varepsilon )\frac{2\ell +1}{4\pi }%
=\sum_{\ell=1 }^\infty \sum_{m= - \ell}^\ell \kappa _{\ell m}(\varepsilon )
\sqrt{\frac{2\ell +1}{4\pi }}%
=\delta _{\varepsilon }(0,0) \ge \frac{c_3} 2 \varepsilon ^{-2}
\end{equation}%
for all $\varepsilon \in (0, \varepsilon_0]$. Moreover, by \eqref{prop1}, we see that
for all $\vartheta >\varepsilon$,
\begin{equation} \label{prop6}
\begin{split}
\sum_{\ell =1}^{\infty }b_{\ell }(\varepsilon )\frac{2\ell +1}{4\pi }P_{\ell
}(\cos \vartheta ) &=\sum_{\ell m}\kappa _{\ell m}(\varepsilon )\sqrt{\frac{%
2\ell +1}{4\pi }}Y_{\ell m}(\vartheta ,\varphi ) \\
&=\delta _{\varepsilon }(\vartheta ,\varphi )=0.
\end{split}
\end{equation}%


\section{Strong Local Nondeterminism: Proof of Theorem \ref{ND}
\label{LocalNondeterminism}}

We are now in the position to prove Theorem \ref{ND}. Recall that $T =
\{T(x), x \in \S^2\}$ is an isotropic Gaussian random field with mean zero
and angular power spectrum $\left\{ C_{\ell }\right\} .$ We prove the
following more general theorem which implies  Theorem \ref{ND} when
$2 < \alpha <4$. For $\alpha \ge 4$, the lower bound given by \eqref{Th:LND}
is strictly smaller than $\rho_\alpha^2( \varepsilon)$. Lemma \ref{C1}
indicates that \eqref{Th:LND} can be improved if $n=1$.
However, it is not known if one can strengthen \eqref{Th:LND} for all
$n \ge 2$.

\begin{theorem} \label{C2}
Under Condition (A), there exist positive and finite  constants  $\varepsilon_0$
and $c_{2}$ such that for all $\varepsilon \in (0, \varepsilon_0]$,  all integers
$n  \geq 1$ and all $x_{0},\, x_{1}, ...,\, x_{n}\in  \mathbb{S}^{2},$ satisfying
$ d_{\S^{2}}(x_{0},x_{k})\ge \varepsilon,$  we have
\begin{equation}\label{Th:LND}
{\rm Var}\left( T\left( x_{0}\right) |T\left( x_{1}\right) ,...,T\left(
x_{n}\right) \right) \geq c_{2}\varepsilon^{\alpha -2}.
\end{equation}
\end{theorem}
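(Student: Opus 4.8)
The plan is to exploit the duality between conditional variance and $L^2(\Omega)$-approximation, using the spherical bump function of Section~\ref{TechnicalTools} as a test function in the spectral (spherical-harmonic) domain. Since $T$ is isotropic and the conditional variance is invariant under a simultaneous rotation of $x_0,\dots,x_n$, I would first rotate so that $x_0$ coincides with the North Pole $N$, which allows me to use the bump function $\delta_\varepsilon$ centred at $x_0$. Because $T$ is Gaussian, the conditional variance equals the squared distance from $T(x_0)$ to the linear span of $T(x_1),\dots,T(x_n)$ in $L^2(\Omega)$, i.e.
\begin{equation*}
{\rm Var}\big(T(x_0)\mid T(x_1),\dots,T(x_n)\big)=\inf_{u_1,\dots,u_n}\mathbb{E}\Big[\big(T(x_0)-\textstyle\sum_{k=1}^n u_k T(x_k)\big)^2\Big],
\end{equation*}
so it suffices to bound this infimum from below uniformly in the $u_k$. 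Inserting the spectral representation \eqref{Eq:T-rep1} and using \eqref{Def:Cl} (the $a_{\ell m}$ being uncorrelated with variance $C_\ell$), the expectation on the right equals $\sum_{\ell,m}C_\ell\,|Y_{\ell m}(x_0)-\sum_k u_k Y_{\ell m}(x_k)|^2$.

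The key step is a Cauchy--Schwarz pairing against the coefficients $\kappa_{\ell m}(\varepsilon)$ of $\delta_\varepsilon$. Since $\delta_\varepsilon$ is zonal, only the $m=0$ terms survive, and by construction $\sum_\ell \kappa_{\ell 0}(\varepsilon)Y_{\ell 0}(x)=\delta_\varepsilon(x)$; hence the linear functional $\sum_\ell \kappa_{\ell 0}(\varepsilon)\big(Y_{\ell 0}(x_0)-\sum_k u_k Y_{\ell 0}(x_k)\big)$ collapses to $\delta_\varepsilon(x_0)-\sum_k u_k\,\delta_\varepsilon(x_k)$. This is where the geometric hypothesis enters: shrinking the radius slightly (e.g.\ working with $\delta_{\varepsilon/2}$ to avoid the boundary case $d_{\S^2}(x_0,x_k)=\varepsilon$), the support property \eqref{prop6} gives $\delta_\varepsilon(x_k)=0$ for every $k$, while \eqref{prop5} gives $\delta_\varepsilon(x_0)\ge \tfrac{c_3}{2}\varepsilon^{-2}$. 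Applying Cauchy--Schwarz with weights $C_\ell$ and discarding the nonnegative $m\ne 0$ terms then yields, uniformly in the $u_k$,
\begin{equation*}
\mathbb{E}\Big[\big(T(x_0)-\textstyle\sum_k u_k T(x_k)\big)^2\Big]\ \ge\ \frac{|\delta_\varepsilon(x_0)|^2}{\sum_\ell |\kappa_{\ell 0}(\varepsilon)|^2/C_\ell}\ \ge\ \frac{(c_3/2)^2\,\varepsilon^{-4}}{\sum_\ell |\kappa_{\ell 0}(\varepsilon)|^2/C_\ell},
\end{equation*}
so the whole problem reduces to an upper bound on the denominator.

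The main obstacle, and the source of all the $\alpha$-dependence, is the estimate $\sum_\ell |\kappa_{\ell 0}(\varepsilon)|^2/C_\ell \le C\,\varepsilon^{-(\alpha+2)}$. Writing $|\kappa_{\ell 0}(\varepsilon)|^2=\tfrac{2\ell+1}{4\pi}|b_\ell(\varepsilon)|^2$ and using $C_\ell^{-1}\le c_0\,\ell^{\alpha}$ from Condition (A), I would split the sum near $\ell=\lfloor 1/\varepsilon\rfloor$: for $\ell\le 1/\varepsilon$ I use the uniform bound $|b_\ell(\varepsilon)|\le c_4$ from \eqref{prop3}, giving a contribution of order $\sum_{\ell\le 1/\varepsilon}\ell^{\alpha+1}$, which is at most a constant multiple of $\varepsilon^{-(\alpha+2)}$; for $\ell>1/\varepsilon$ I use the decay estimate $|b_\ell(\varepsilon)|\le K_r(\varepsilon\ell)^{-r}$ from \eqref{prop4} with any $r>(\alpha+2)/2$ (which forces the smoothness order $M$ of $\widehat G$ to be taken large enough), giving a contribution of order $\varepsilon^{-2r}\sum_{\ell>1/\varepsilon}\ell^{\alpha+1-2r}$, again at most a constant multiple of $\varepsilon^{-(\alpha+2)}$. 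Substituting this into the displayed lower bound produces
\begin{equation*}
{\rm Var}\big(T(x_0)\mid T(x_1),\dots,T(x_n)\big)\ \ge\ c_2\,\frac{\varepsilon^{-4}}{\varepsilon^{-(\alpha+2)}}=c_2\,\varepsilon^{\alpha-2},
\end{equation*}
which is \eqref{Th:LND}. I expect the delicate points to be the bookkeeping in this two-regime split and the verification that the constant $c_2$ depends neither on $n$ nor on the configuration $x_1,\dots,x_n$ (which is automatic here, since only $x_0$ and the support of $\delta_\varepsilon$ play a role), together with the careful matching of the bump radius to the separation $\varepsilon$ so that the vanishing property \eqref{prop6} may be invoked.
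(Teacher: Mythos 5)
Your proposal is correct and follows essentially the same route as the paper: reduce the conditional variance to an infimum over linear predictors, expand spectrally, pair against the zonal bump-function coefficients via Cauchy--Schwarz so that the support property kills the terms at $x_1,\dots,x_n$ while $\delta_\varepsilon(x_0)\gtrsim\varepsilon^{-2}$, and bound the weighted coefficient sum by $O(\varepsilon^{-(\alpha+2)})$ through the same two-regime split at $\ell\approx 1/\varepsilon$. Your extra care about the boundary case $d_{\S^2}(x_0,x_k)=\varepsilon$ (shrinking the bump radius) is a minor refinement the paper glosses over, but the argument is otherwise identical.
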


\begin{proof}
As before, we work in spherical coordinates $(\vartheta ,\varphi )$ and we
take without loss of generality $x_0=(0,0)$ to be the North Pole, and $%
x_{k}=(\vartheta _{k},\varphi _{k})$ so that $d_{\S^{2}}(x,x_{k})=\vartheta
_{k}.$ To establish \eqref{Th:LND}, it is sufficient to prove that there exists
a positive constant $c_2$ such that for all choices of real numbers
$\gamma _{1}, ..., \gamma _{n}$, we have
\begin{equation}\label{Eq:LND2}
\mathbb E\bigg\{ \bigg(T(0)-\sum_{j=1}^{n}\gamma _{j}T(x_{j})\bigg)^2\bigg\}
\ge c_2 \,\varepsilon ^{\alpha -2}.
\end{equation}
It follows from \eqref{Eq:T-rep1}, \eqref{Eq:Cov-T} or (\ref{Def:Cl}) that
\begin{equation*}
\begin{split}
\mathbb E\bigg\{ \bigg(T(0)-\sum_{j=1}^{n}\gamma _{j}T(x_{j})\bigg)^2 \bigg\}
&= \mathbb E\bigg\{ \bigg(
\sum_{\ell m}a_{\ell m}Y_{\ell m}(0)-\sum_{j=1}^{n}\gamma _{j}\sum_{\ell
m}a_{\ell m}Y_{\ell m}(x_{j}) \bigg)^2 \bigg\} \\
&=\sum_{\ell m}\mathbb E (\left\vert a_{\ell m}\right\vert ^{2}) \bigg\vert Y_{\ell
m}(0)-\sum_{j=1}^{n}\gamma _{j}Y_{\ell m}(x_{j})\bigg\vert ^{2} \\
&=\sum_{\ell } \sum_{m}C_{\ell }\bigg\vert Y_{\ell
m}(0)-\sum_{j=1}^{n}\gamma _{j}Y_{\ell m}(x_{j})\bigg\vert ^{2}.  \\
\end{split}
\end{equation*}%
Hence, (\ref{Eq:LND2}) is a consequence of Proposition \ref{Ing} below.
\end{proof}

\begin{proposition}
\label{Ing} Assume Condition (A) holds.  For all $\varepsilon \in (0, \varepsilon_0]$,
there exists a constant $c_2>0$ such that for all choices of $n\in \mathbb{N},$
all $(\vartheta _{j},\varphi _{j}): \vartheta_{j}>\varepsilon,$ and $\gamma
_{j}\in \mathbb{R},$ $j=1,2,...,n$, we have
\begin{equation}\label{Eq:LND3}
\sum_{\ell }\sum_{m} C_{\ell }\bigg[ Y_{\ell
m}(0,0)-\sum_{j=1}^{n}\gamma _{j}Y_{\ell m}(\vartheta _{j},\varphi _{j})\bigg]
^{2} \geq c_2\varepsilon ^{\alpha -2}.
\end{equation}
\end{proposition}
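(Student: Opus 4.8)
The plan is to use a duality (test-function) argument built on the spherical bump function $\delta_\varepsilon$ constructed above. Abbreviate $u_{\ell m} := Y_{\ell m}(0,0) - \sum_{j=1}^n \gamma_j Y_{\ell m}(\vartheta_j,\varphi_j)$, so that the left-hand side of \eqref{Eq:LND3} is $\sum_{\ell m} C_\ell |u_{\ell m}|^2$. I would pair the sequence $\{u_{\ell m}\}$ against the coefficient sequence $\{\kappa_{\ell m}(\varepsilon)\}$ of $\delta_\varepsilon$ and apply the Cauchy--Schwarz inequality in the weighted form
\[
\Big(\sum_{\ell m}\kappa_{\ell m}(\varepsilon)\,u_{\ell m}\Big)^2 \le \Big(\sum_{\ell m}\frac{\kappa_{\ell m}(\varepsilon)^2}{C_\ell}\Big)\Big(\sum_{\ell m} C_\ell\,|u_{\ell m}|^2\Big),
\]
which rearranges to a lower bound for $\sum_{\ell m} C_\ell |u_{\ell m}|^2$ as the ratio of the squared numerator to the denominator on the right. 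Only the $m=0$ terms survive, since $\kappa_{\ell m}(\varepsilon)=0$ for $m\ne 0$ by \eqref{prop3}, so no complex conjugation issue arises. The whole proof then reduces to bounding the numerator from below and the denominator from above.

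For the numerator, I would use the representation $\delta_\varepsilon(\vartheta,\varphi)=\sum_{\ell m}\kappa_{\ell m}(\varepsilon)Y_{\ell m}(\vartheta,\varphi)$ from \eqref{prop0} together with linearity: the pairing collapses into point values of $\delta_\varepsilon$, namely $\sum_{\ell m}\kappa_{\ell m}(\varepsilon)\,u_{\ell m}=\delta_\varepsilon(0,0)-\sum_{j=1}^n\gamma_j\,\delta_\varepsilon(\vartheta_j,\varphi_j)$. Here the geometric hypothesis enters decisively: since every conditioning point satisfies $\vartheta_j>\varepsilon$, the support property \eqref{prop6} forces $\delta_\varepsilon(\vartheta_j,\varphi_j)=0$, so all the $\gamma_j$ terms vanish and the numerator equals $\delta_\varepsilon(0,0)^2$. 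By \eqref{prop5} this is at least $(c_3/2)^2\varepsilon^{-4}$. This is exactly the point of the construction: the bump is large at the North Pole yet vanishes at all the points $x_j$, which removes any dependence of the numerator on $n$ and on the free coefficients $\gamma_j$.

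For the denominator, recall from \eqref{prop3} that $\kappa_{\ell 0}(\varepsilon)^2=\frac{2\ell+1}{4\pi}b_\ell(\varepsilon)^2$, while Condition (A), \eqref{reg2}, gives $1/C_\ell\le c_0\,\ell^\alpha$. Hence there is a constant $C$ with
\[
\sum_{\ell m}\frac{\kappa_{\ell m}(\varepsilon)^2}{C_\ell}\le C\sum_{\ell=1}^\infty \ell^{\alpha+1}\,b_\ell(\varepsilon)^2,
\]
and I would split this series at $\ell\approx 1/\varepsilon$. For $\ell\le 1/\varepsilon$ the uniform bound $|b_\ell(\varepsilon)|\le c_4$ from \eqref{prop3} gives a contribution of order $\sum_{\ell\le 1/\varepsilon}\ell^{\alpha+1}\le C\varepsilon^{-(\alpha+2)}$; for $\ell>1/\varepsilon$ the decay estimate \eqref{prop4}, $|b_\ell(\varepsilon)|\le K_r\,\varepsilon^{-r}\ell^{-r}$ with $r\le M$ chosen so that $2r>\alpha+2$, gives $\varepsilon^{-2r}\sum_{\ell>1/\varepsilon}\ell^{\alpha+1-2r}\le C\varepsilon^{-(\alpha+2)}$ as well. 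Thus the denominator is $O(\varepsilon^{-(\alpha+2)})$, and combining with the numerator bound yields
\[
\sum_{\ell m}C_\ell\,|u_{\ell m}|^2\ge \frac{(c_3/2)^2\,\varepsilon^{-4}}{C\,\varepsilon^{-(\alpha+2)}}=c_2\,\varepsilon^{\alpha-2},
\]
which is precisely \eqref{Eq:LND3}.

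I expect the denominator estimate to be the only genuinely computational step, and the one place needing care there is matching the two pieces at $\ell\approx 1/\varepsilon$ so that both contribute the same order $\varepsilon^{-(\alpha+2)}$; this is also what fixes how large the smoothness $M$ of $\widehat G$ must be taken. The conceptual heart of the argument is instead the choice of test sequence: using $\{\kappa_{\ell m}(\varepsilon)\}$ converts the otherwise intractable infimum over all $n$ and all $\gamma_j$ into the single clean quantity $\delta_\varepsilon(0,0)$, thanks to the vanishing of $\delta_\varepsilon$ off the cap of radius $\varepsilon$.
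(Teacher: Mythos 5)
Your proposal is correct and follows essentially the same route as the paper: the weighted Cauchy--Schwarz pairing against the bump coefficients $\{\kappa_{\ell m}(\varepsilon)\}$, the collapse of the numerator to $\delta_\varepsilon(0,0)^2\ge (c_3/2)^2\varepsilon^{-4}$ via the support property \eqref{prop6}, and the split of the denominator at $\ell\approx 1/\varepsilon$ using \eqref{prop3} and \eqref{prop4} to get $O(\varepsilon^{-(\alpha+2)})$. Your bound $O(\varepsilon^{-(\alpha+2)})$ for the denominator is the one actually needed and obtained in the paper's computation (the exponent displayed in \eqref{Eq:LND5} is a typographical slip there), so nothing is missing.
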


\begin{proof}
For any fixed $\varepsilon >0,$ let $\delta _{\varepsilon }(\cdot,\cdot)$ be defined as
in (\ref{prop0}), \ with the corresponding coefficients $\big\{b_{\ell m}(\varepsilon)
\big\}$ and $\big\{ \kappa _{\ell m}(\varepsilon )\big\}$ such that conditions (\ref{prop1}),
(\ref{prop2}), (\ref{prop3}), (\ref{prop4}), (\ref{prop5}) and (\ref{prop6}) hold. Now we
consider
\begin{equation*}
I = \sum_{\ell }\sum_{m}\left( \frac{\kappa _{\ell m}(\varepsilon )}{\sqrt{%
C_{\ell }}}\right) \left\{ \sqrt{C_{\ell }}\bigg[ Y_{\ell
m}(0,0)-\sum_{j=1}^{n}\gamma _{j}Y_{\ell m}(\vartheta_{j},\varphi_{j})\bigg] \right\}.
\end{equation*}%
On one hand, by the Cauchy-Schwartz inequality%
\begin{equation*}
\begin{split}
I^2 &\le 
\left\{ \sum_{\ell m}\frac{\kappa _{\ell m}^{2}(\varepsilon )}{C_{\ell }}%
\right\} \Bigg\{\sum_{\ell }\sum_{m}  C_{\ell }\bigg[ Y_{\ell
m}(0,0)-\sum_{j=1}^{n}\gamma _{j}Y_{\ell m}(\vartheta_{j},\varphi_{j})\bigg] ^{2}\Bigg\}\\
& \leq \left\{ \sum_{\ell }\frac{(2\ell +1)}{4\pi }\frac{b_{\ell }^{2}(\varepsilon )%
}{C_{\ell }}\right\} \Bigg\{ \sum_{\ell }  C_{\ell }\sum_{m}\bigg[ Y_{\ell
m}(0,0)-\sum_{j=1}^{n}\gamma _{j}Y_{\ell m}(\vartheta_{j},\varphi_{j})\bigg]^{2}\Bigg\}.
\end{split}
\end{equation*}%
This inequality can be rewritten as
\begin{equation} \label{Eq:LND4}
\sum_{\ell }C_{\ell }\sum_{m}\bigg[ Y_{\ell
m}(0,0)-\sum_{j=1}^{n}\gamma _{j}Y_{\ell m}(\vartheta_{j},\varphi_{j})\bigg] ^{2}
\geq \frac{I^2}
{\sum_{\ell }\frac{(2\ell +1)}{4\pi }\frac{b_{\ell
}^{2}(\varepsilon )}{C_{\ell }}  }.
\end{equation}%
On the other hand, we can compute $I^2$ directly. It follows from \eqref{prop5} and
\eqref{prop6} that
\begin{equation*}
\sum_{\ell }\sum_{m}\kappa _{\ell m}(\varepsilon )Y_{\ell m}(0,0) 
=\sum_{\ell }\frac{2\ell +1}{
4\pi }b_{\ell }(\varepsilon )=\delta _{\varepsilon }(0,0) \ge \frac{c_3}{ 2
\varepsilon ^{2}},
\end{equation*}%
and 
\begin{equation*}
\begin{split}
\sum_{\ell }\sum_{m}\kappa _{\ell m}(\varepsilon )\bigg\{
\sum_{j=1}^{n}\gamma _{j}Y_{\ell m}(\vartheta_{j},\varphi_{j})\bigg\} &=\sum_{j=1}^{n}\gamma
_{j}\sum_{\ell }\sum_{m}\kappa _{\ell m}(\varepsilon )Y_{\ell m}(\vartheta_{j},\varphi_{j}) \\
&=\sum_{j=1}^{n}\gamma _{j}\bigg\{ \sum_{\ell }\frac{2\ell +1}{4\pi }%
b_{\ell }(\varepsilon )P_{\ell }(\cos (N,x_{j}))\bigg\} \\
&=\sum_{j=1}^{n}\gamma _{j}\delta _{\varepsilon }(\vartheta _{j},\varphi
_{j})=0,
\end{split}
\end{equation*}%
because $\vartheta_{j}>\varepsilon $ by assumption.
The above two equations imply that  $I \ge  \frac{c_3}{ 2}  \varepsilon^{-2}$ and
hence \eqref{Eq:LND3} will follow from \eqref{Eq:LND4} if we can show that
\begin{equation}  \label{Eq:LND5}
\sum_{\ell }\frac{(2\ell +1)}{4\pi }\frac{b_{\ell }^{2}(\varepsilon )}{%
C_{\ell }}=O(\varepsilon ^{- \alpha +2}).
\end{equation}

Now we verify (\ref{Eq:LND5}). It follows from (\ref{prop4})  that for $r$ large
enough there exists a constant $c_{r}>0$ such that%
\begin{equation*}
b_{\ell }^{2}(\varepsilon )\leq \frac{c_{r}}{(\ell \varepsilon )^{r}}.
\end{equation*}%
Hence, by choosing an integer $L=L(\varepsilon )=\lfloor \varepsilon \rfloor
^{-1},$ we obtain
\begin{equation}\label{Eq:LND6}
\begin{split}
\sum_{\ell =1}^{\infty }\frac{(2\ell +1)}{4\pi }\frac{b_{\ell
}^{2}(\varepsilon )}{C_{\ell }} &=\sum_{\ell =L}^{\infty }\frac{(2\ell +1)}{%
4\pi }\frac{b_{\ell }^{2}(\varepsilon )}{C_{\ell }}+\sum_{\ell =1}^{L}\frac{%
(2\ell +1)}{4\pi }\frac{b_{\ell }^{2}(\varepsilon )}{C_{\ell }} \\
&\leq \frac{c_{r}}{\varepsilon ^{\alpha +2}}\sum_{\ell =L}^{\infty }(\ell
\varepsilon )\frac{1}{(\ell \varepsilon )^{r}}(\varepsilon \ell )^{\alpha
}\varepsilon +\sum_{\ell =1}^{L}\frac{(2\ell +1)}{4\pi }\frac{b_{\ell
}^{2}(\varepsilon )}{C_{\ell }}.
\end{split}
\end{equation}%
Now%
\begin{equation*}
\frac{c_{r}}{\varepsilon ^{\alpha +2}}\sum_{\ell =L}^{\infty }(\ell
\varepsilon )\frac{1}{(\ell \varepsilon )^{r}}(\varepsilon \ell )^{\alpha
}\varepsilon \leq \frac{c_{r}^{\prime }}{\varepsilon ^{\alpha +2}}%
\int_{1}^{\infty }x^{\alpha -r+1}dx\leq \frac{c_{r}^{\prime \prime }}{%
\varepsilon ^{\alpha +2}},
\end{equation*}%
for $r>\alpha +2,$ whereas we can  bound the second term from above by%
\begin{equation*}
\sum_{\ell=1 }^{L}\frac{(2\ell +1)}{4\pi }\frac{b_{\ell }^{2}(\varepsilon )}{%
C_{\ell }}\le c\sum_{\ell =1}^{L}\frac{(2\ell +1)}{4\pi }\ell ^{\alpha
}\le c\, L^{\alpha +2}\sim c\varepsilon ^{-(\alpha +2)}.
\end{equation*}%
Combining (\ref{Eq:LND6}) with the above verifies (\ref{Eq:LND5}), which finishes
the proof of (\ref{Eq:LND3}).
\end{proof}

\begin{remark}
At this stage we can draw an analogy between the isotropic spherical
random fields satisfying Condition (A) with $2< \alpha < 4$ and a fractional
Brownian field with self-similarity parameter $H.$ The analogy can be made
clearer by setting the parameter  values so that $2H+2=\alpha,$ and Lemma
\ref{C1} shows that the variogram of $T = \{T(x), x \in \S^2\}$ is of the order
$d_{\S^2}(x,y)^{2H}= d_{\S^2}(x,y)^{\alpha -2}.$ This indicates that $T $ shares
many analytic and fractal properties with a fractional Brownian field with
parameter $H.$ Indeed, by applying Lemma \ref{C1}  and Theorem \ref{ND},
we can prove that, for any $u \in \R$, the Hausdorff dimension of the level
set $T^{-1}(u)$ is given by
$$\dim_{\rm H} T^{-1}(u)= 2-\frac{\alpha -2}{2}, \quad \hbox{ a.s.}, $$
which shows that for $2<\alpha <4$ we have a fractal curve of Hausdorff
dimension $\in (1, 2)$.

Notice that, $\dim_{\rm H} T^{-1}(u)= 1$ when $\alpha \ge 4$, but the nature of
the level curve is different for  $\alpha > 4$ and $\alpha =4$, respectively.
For $\alpha > 4$,  the sample function $T(x)$ is differentiable. Thus its level
curve $T^{-1}(u)$ is regular. While for $\alpha = 4$ we believe that the level curve
is not differentiable and possesses subtle fractal properties. Investigation of the
topological and geometric properties of $T^{-1}(u)$ and more general excursion
sets in more details is left for future research.
\end{remark}

\section{Modulus of continuity: Proof of Theorem \ref{MC} \label{ModulusContinuity}}

We start by state 0-1 laws regarding the uniform and local moduli of continuity for
an isotropic spherical Gaussian field $T = \{T(x), x \in \S^2\}$. It is a consequence of
the representation (\ref{Eq:T-rep1}) and Kolmogorov's 0-1 law. 
We first rewrite Lemma 7.1.1 in Marcus and Rosen \cite{MRbook} as follows.

\begin{lemma}\label{Lem-s4-1}
Let $\{T(x), x\in \S^2\}$ be a centered Gaussian random field on $\S^2$. Let
$\varphi: \R_+\to\R_+$ be a function with $\varphi(0+) = 0$. 
Then
$$
\lim_{\varepsilon \to 0}\sup_{^{\;\; x,y\in \S^2}_{d_{\S^2}(x,y)\leq\varepsilon} }
\frac{|T(x)-T(y)|}{\varphi (d_{\S^2}(x, y))}\leq K,\;\; \hbox{a.s.\, for some constant}\;\; K<\infty$$
implies that
 $$
 \lim_{\varepsilon \to0}\sup_{^{\;\; x,y\in \S^2}_{d_{\S^2}(x,y)\leq\varepsilon} }
 \frac{|T(x)-T(y)|}{\varphi(d_{\S^2}(x, y))}=K',\;\; \hbox{a.s.\, for some constant} \;\;
  K'<\infty.$$
\end{lemma}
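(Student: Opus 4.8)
The plan is to show that the random variable
\[
\Xi := \lim_{\varepsilon\to0}\ \sup_{\substack{x,y\in\S^2\\ d_{\S^2}(x,y)\le\varepsilon}}\frac{|T(x)-T(y)|}{\varphi(d_{\S^2}(x,y))}
\]
is measurable with respect to the tail $\sigma$-field generated by the Gaussian coefficients $\{a_{\ell m}\}$, and then to invoke Kolmogorov's 0-1 law. First I would check that $\Xi$ is a genuine $[0,\infty]$-valued random variable: the inner supremum is nondecreasing in $\varepsilon$, so the limit exists (it is the infimum over $\varepsilon>0$), and under the hypothesis it is bounded by $K<\infty$ almost surely. Since $T$ has a continuous modification, the supremum over $\S^2\times\S^2$ coincides with that over a fixed countable dense subset, which secures measurability. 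I would also record two structural facts used repeatedly: $\Xi$ is a measurable seminorm in $T$, namely $\Xi(cT)=|c|\,\Xi(T)$ and $\Xi(T_1+T_2)\le\Xi(T_1)+\Xi(T_2)$, so that $|\Xi(T)-\Xi(T')|\le\Xi(T-T')$.

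Next I would exploit the spectral representation \eqref{Eq:T-rep1}. Grouping the coefficients by frequency into the blocks $\mathbf a_\ell=(a_{\ell,-\ell},\dots,a_{\ell,\ell})$, which are independent across $\ell$ by \eqref{Def:Cl} together with joint Gaussianity, write $S_n=\sum_{\ell\le n}\sum_m a_{\ell m}Y_{\ell m}$ and $R_n=T-S_n=\sum_{\ell>n}\sum_m a_{\ell m}Y_{\ell m}$. Here $S_n$ is $\sigma(\mathbf a_\ell:\ell\le n)$-measurable and almost surely a finite linear combination of $C^\infty$ spherical harmonics, while $R_n$ is $\sigma(\mathbf a_\ell:\ell> n)$-measurable. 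The crucial step is to prove that $\Xi(S_n)=0$ almost surely. Since $S_n$ is smooth it is Lipschitz on $\S^2$ with some finite random constant $\mathrm{Lip}(S_n)$, whence $|S_n(x)-S_n(y)|\le \mathrm{Lip}(S_n)\,d_{\S^2}(x,y)$ and therefore $\Xi(S_n)\le \mathrm{Lip}(S_n)\,\limsup_{t\to0} t/\varphi(t)$. This last limit is zero for every $\varphi$ occurring under Condition (A): for $2<\alpha<4$ one has $t/\varphi(t)=t^{(4-\alpha)/2}/\sqrt{|\log t|}\to0$, while for $\alpha=4$ one has $t/\varphi(t)=1/|\log t|\to0$ (and analogously in the derivative cases of Theorem \ref{MC-2}). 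Granting $\Xi(S_n)=0$, the seminorm inequality yields $\Xi(T)=\Xi(R_n)$, so $\Xi$ is $\sigma(\mathbf a_\ell:\ell>n)$-measurable for every $n$, hence measurable with respect to the tail $\sigma$-field $\bigcap_n\sigma(\mathbf a_\ell:\ell>n)$.

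Finally, I would conclude by Kolmogorov's 0-1 law: the tail field of the independent sequence $(\mathbf a_\ell)_{\ell\ge1}$ is trivial, so $\Xi$ equals a deterministic constant $K'\in[0,\infty]$ almost surely, and the hypothesis $\Xi\le K$ forces $K'\le K<\infty$, which is the assertion. I expect the main obstacle to be precisely the step $\Xi(S_n)=0$, i.e.\ that the smooth low-frequency part is negligible at the normalizing scale $\varphi$; this is exactly where the growth condition $\varphi(t)/t\to\infty$ as $t\to0$ is needed, and it must be verified in each regime of the spectral index. The remaining points — measurability of the supremum via separability and continuity of $T$, and the independence of the frequency blocks — are routine and I would dispatch them briefly.
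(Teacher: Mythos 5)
Your overall strategy is exactly the one the paper has in mind: the paper offers no written proof of this lemma, saying only that it is a rewriting of Lemma 7.1.1 of Marcus and Rosen and ``a consequence of the representation \eqref{Eq:T-rep1} and Kolmogorov's 0-1 law'', and your argument --- frequency-block decomposition $T=S_n+R_n$, the seminorm inequality $|\Xi(T)-\Xi(R_n)|\le\Xi(S_n)$, tail-measurability, and Kolmogorov --- is precisely that argument. The measurability and independence points you defer are indeed routine (though note that independence of the blocks $\mathbf{a}_\ell$ across $\ell$ comes from \eqref{Def:Cl}, i.e.\ from isotropy, which the lemma's hypotheses do not actually include; for a general centered Gaussian field one would substitute a Karhunen--Lo\`eve expansion, whose coefficients are independent by construction).

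The one genuine gap, which you yourself flag, is the step $\Xi(S_n)=0$. You derive it from $\mathrm{Lip}(S_n)<\infty$ together with $\lim_{t\to0}t/\varphi(t)=0$, but the lemma assumes only $\varphi(0+)=0$; when $\varphi(t)/t$ stays bounded your estimate gives $\Xi(S_n)\le\mathrm{Lip}(S_n)\cdot\limsup_{t\to 0}t/\varphi(t)$, which need not vanish, and tail-measurability of $\Xi$ is not established. Verifying $t/\varphi(t)\to0$ for the specific $\varphi$'s of Theorems \ref{MC} and \ref{MC-2} makes your argument sufficient for every application in the paper, but it does not prove the lemma as stated. The standard repair (and the route of Marcus--Rosen) is to bound increments of the low-frequency part by the canonical metric rather than by the geodesic distance --- since $S_n(x)-S_n(y)$ is an orthogonal projection of $T(x)-T(y)$ one gets, e.g., $|S_n(x)-S_n(y)|\le \|\mathbf{a}_{\le n}\|\,\bigl(\min_{\ell\le n}C_\ell\bigr)^{-1/2}d_T(x,y)$ --- and then to show that the hypothesis itself forces $\lim_{\delta\to0}\sup_{d_{\S^2}(x,y)\le\delta}d_T(x,y)/\varphi\bigl(d_{\S^2}(x,y)\bigr)=0$: if not, one finds pairs with $d_T(x_n,y_n)\ge a\,\varphi\bigl(d_{\S^2}(x_n,y_n)\bigr)$ and $d_{\S^2}(x_n,y_n)\to 0$, and the normalized increments $\eta_n=(T(x_n)-T(y_n))/d_T(x_n,y_n)$ are standard Gaussians whose $\limsup$ cannot be almost surely bounded by a deterministic constant, contradicting the assumed bound $K$. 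With that substitution your tail argument closes in the stated generality.
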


{\bf Proof of Theorem  \ref{MC}.} Because of Lemma \ref{Lem-s4-1}, we see
that \eqref{Eq:Umod} in Theorem  \ref{MC}  will be proved after we establish
upper and lower bounds of the following form:  If $2 < \alpha < 4$, then there
exist positive and finite constants $K_5$ and $K_6$ such that
\begin{equation}\label{Eq:Umod-upper}
\lim_{\varepsilon \rightarrow 0}\sup_{\substack{ x,y\in \S^{2}, \\ %
d(x,y)\le\varepsilon }}\frac{|T(x)-T(y)|}{d_{\S^{2}}(x,y)^{(\alpha -2)/2}\sqrt{%
\big| \log {d_{\S^{2}}(x,y)} \big|}} \le K_5, \ \ \hbox{ a.s. }
\end{equation}%
and
\begin{equation}\label{Eq:Umod-lower}
\lim_{\varepsilon \rightarrow 0}\sup_{\substack{ x,y\in \S^{2}, \\ %
d(x,y) \le \varepsilon }}\frac{|T(x)-T(y)|}{d_{\S^{2}} (x, y)^{(\alpha - 2)/2} \sqrt{ \big|
\log d_{\S^{2}} (x,y) \big|} } \ge K_6, \ \ \hbox{ a.s.}
\end{equation}%

We divide the rest of the  proof of Theorem  \ref{MC} into three parts.

{\it Step 1: Proof of \eqref{Eq:Umod-upper}.}  We introduce an auxiliary Gaussian field:
$$ Y=\{Y(x, y), \, x, y\in \S^2, d_{S^2}(x, y) \le \varepsilon\} $$
defined by $Y(x,y)=T(x)-T(y)$, where $\varepsilon > 0$ is
small so that \eqref{Eq:Vario-bounds} in Lemma \ref{C1} holds. The canonical metric
$d_Y$ on $\Gamma:= \{ (x, y)\in \S^2 \times \S^2: d_{\S^2}(x, y)
\le \varepsilon \}$ associated with $Y$ satisfies the following inequality:
\begin{equation}\label{Eq:dY}
d_Y((x,y),(x',y')) \leq  \min\{d_{T}(x,x')+ d_{T}(y,y'), d_{T}(x,y)+d_{T}(x',y')\}.
\end{equation}
Denote the diameter of $\Gamma$ in the metric $d_Y$ by $D$. Then, by \eqref{Eq:dY},
we have
$$ D\leq \sup_{(x, y)\in \Gamma} (d_{T}(x,y)+d_{T}(x',y')) \leq 2 \varepsilon.
$$
For any $\eta > 0$, let $N_Y(\Gamma,\eta)$ be the smallest number of open $d_Y$-balls of
radius $\eta$ needed to cover $\Gamma$. It follows from \eqref{Eq:dY} that for $2 < \alpha < 4$,
$$
N_Y(\Gamma,\eta) \leq K_7 \eta^{- \frac{4} {\alpha - 2} },
$$
for some positive and finite constant $K_7$, and one can verify that
$$
\int_0^D\sqrt{\log N_Y(T, \eta)} \, d\eta \leq K\, \varepsilon
\sqrt{\log(1+ \varepsilon^{-1})}. $$
Hence, by Theorem 1.3.5 in \cite{RFG}, we have
$$
\limsup_{\varepsilon \to 0}\, \sup_{^{\;\; x,y\in \S^2}_{d_{\S^2}(x,y)\leq \varepsilon} }
\frac{|T(x)-T(y)|} {\varepsilon^{(\alpha -2)/2} \sqrt{ |\log \varepsilon| }}\leq K, \quad \hbox{a.s.}
$$
for some finite constant $K$. One can verify (cf. Lemma 7.1.6 in \cite{MRbook})
that this implies \eqref{Eq:Umod-upper}.

{\it Step 2: Proof of \eqref{Eq:Umod-lower}.}  For any $n \geq \lfloor |\log_2
\varepsilon_0|\rfloor +1$, where $\varepsilon_0$ is as in Theorem \ref{C2}, we chose
a sequence of $2^n$ points $\{x_{n,i}, 1 \le i \le 2^n\}$ on $\S^2$ that are equally
separated in the following  sense: For every $2 \le k \le 2^n$, we have
\begin{equation}\label{Eq: xpoints}
 \min_{1 \le i \le k-1} d_{S^2}(x_{n,k}, x_{n,i}) = d_{S^2}(x_{n,k}, x_{n, k-1}) = 2^{-n}.
\end{equation}
There are many ways to choose such a sequence on $\S^2$.  Notice that
\begin{equation}\label{Eq:Umod-lower2}
\begin{split}
&\lim_{\varepsilon \rightarrow 0}\sup_{\substack{ x,y\in S^{2}, \\ %
d_{S^2}(x,y)\le \varepsilon } } \frac{|T(x)-T(y)|} {d_{S^{2}} (x, y)^{(\alpha - 2)/2}
\sqrt{ |\log d_{S^{2}}(x,y)|} } \\
& \ge \liminf_{n \to \infty} \max_{2 \le k \le 2^n}\frac{\big|T(x_{n, k})-T(x_{n, k-1} \big)| }
{2^{- n(\alpha - 2)/2} \sqrt{ n} }
\end{split}
\end{equation}%
It is sufficient to prove that, almost surely,  the last limit in (\ref{Eq:Umod-lower2}) is
bounded below by a positive constant. This is done by applying the property of strong
local nondeterminism in Theorem \ref{C2} and a standard Borel-Cantelli argument.

Let  $\eta > 0$ be a constant whose value will be chosen later. We consider
the events
$$
A_n = \Big\{ \max_{2 \le k \le 2^n}  \big |T(x_{n, k})-T(x_{n, k-1}) \big |
\le \eta 2^{- n(\alpha - 2)/2 }
\sqrt{ n} \Big\}
$$
and write
\begin{equation}\label{Eq:inter-1}
\begin{split}
\P \big(A_n\big) &= \P \Big\{ \max_{2 \le k \le 2^n-1}  \big |T(x_{n, k})-T(x_{n, k-1}) \big |
\le \eta 2^{- n(\alpha - 2)/2 } \sqrt{ n} \Big\}\\
&\qquad \times  \P \Big\{ \big |T(x_{n, 2^n})-T(x_{n, 2^n-1} ) \big |
\le \eta 2^{- n(\alpha - 2)/2} \sqrt{ n} \big | \widetilde{A}_{2^n-1} \Big\},
\end{split}
\end{equation}
where $\widetilde{A}_{2^n-1} = \big\{ \max_{2 \le k \le 2^n-1} \big |T(x_{n, k})-T(x_{n, k-1}) \big |
 \le \eta 2^{- n(\alpha - 2)/2 } \sqrt{ n} \big\}$.
The conditional distribution of the Gaussian random variable $T(x_{n, 2^n})-T(x_{n, 2^n-1} )$
under $\widetilde{A}_{2^n-1}$ is still Gaussian and, by Theorem  \ref{C2}, its conditional
variance satisfies
\[
\mathrm{Var}\big(T(x_{n, 2^n})-T(x_{n, 2^n-1}) \big| A_{n-1}\big)
\ge  c_2\, 2^{-(\alpha-2) n}.
\]
This and Anderson's inequality (see \cite{A55}) imply
\begin{equation}\label{Eq:inter-2}
\begin{split}
\P \Big\{ \big |T(x_{n, 2^n})-T(x_{n, 2^n-1} ) \big |
&\le \eta 2^{- n(\alpha - 2)/2} \sqrt{ n} \big | \tilde{A}_{2^n-1} \Big\}
 \le \P \Big\{ N(0, 1) \le c\, \eta \sqrt{n}\big\}\\
& \le  1 - \frac 1 { c\eta \sqrt{n}} \exp \Big(- \frac{c^2\eta^2 n}{2}\Big)\\
&\le \exp \bigg( - \frac{ 1} {c \eta \sqrt{n} } \exp \Big(- \frac{c^2\eta^2 n} {2}\Big)\bigg).
\end{split}
\end{equation}
In deriving the above, we have applied Mill's ratio and the elementary inequality
$1-x \le e^{-x} $ for $x > 0$. Iterating this procedure in (\ref{Eq:inter-1}) for $2^n$ times,
we obtain
\begin{equation}\label{Eq:inter-3}
\P \big(A_n\big) \le \exp \bigg( - \frac{ 1} {c \eta \sqrt{n} } 2^n\,
\exp \Big(- \frac{c^2\eta^2 n} {2}\Big)\bigg).
 \end{equation}
By taking $\eta > 0$ small enough such that $c^2 \eta^2 < 2 $, we have
$ \sum_{n=1}^\infty \P\big(A_n\big)$ $< \infty$. Hence the Borel-Cantelli lemma
implies that the right-hand side of (\ref{Eq:Umod-lower2}) is bounded from below
by $\eta > 0$.

{\it Step 3: Proof of \eqref{Eq:Umod2} for $\alpha =4$.} This is similar to the proof in Step 1,
except that the diameter $D$ of $\Gamma$ in the metric $d_Y$ is now comparable to
$K \varepsilon \sqrt {|\log \varepsilon|}$ and the covering number $
N_Y(\Gamma,\eta) \leq K \eta^{- 2}|\log \eta|$. Hence, in this case,
$$
\int_0^D\sqrt{\log N_Y(T, \eta)} \, d\eta \leq K\, \varepsilon |\log \varepsilon|. $$
Applying again Theorem 1.3.5 in \cite{RFG} yields that for $\alpha = 4$,
$$
\limsup_{\varepsilon \to 0}\, \sup_{^{\;\; x,y\in S^2}_{d_{S^2}(x,y)\leq \varepsilon} }
\frac{|T(x)-T(y)|} {\varepsilon  |\log \varepsilon| }\leq K, \quad \hbox{a.s.}
$$
Hence \eqref{Eq:Umod2} follows from this and Lemma 7.1.6 in \cite{MRbook}.
This finishes the proof of Theorem \ref{MC}.



\section{Higher-Order Derivatives: Proof of Theorem \ref{MC-2}
\label{HigherDerivatives}}


Now we consider the case of $\alpha > 4$. Let $k\ge1$ be the integer such
that  $2 +2k < \alpha \le 4+2k$, and let $T^{(k)}= \{T^{(k)}(x), x \in \S^2\}$ be
the Gaussian random field defined by $ T^{(k)}=(1-\Delta _{\S^{2}})^{k/2}T.$
It follows from \eqref{Tk} that $T^{(k)}$ is again isotropic  and its angular
power spectrum is given by
\[
\widetilde{C}_{\ell }=\mathbb{E} \big(\left\vert a_{\ell m}\right\vert
^{2}\big) (1+\ell (\ell +1))^{k}=C_{\ell }(1+\ell (\ell +1))^{k}, \ \ \ \ell =1,2,...
\]
Under  Condition (A), we have $\widetilde{C}_{\ell } = \widetilde{G}
\left( \ell \right) \ell ^{2k-\alpha }$  for all $\ell =1,2,...$, where
\[
c_{6}^{-1}\leq \widetilde{G}\left( \ell \right) \leq c_{6}
\]%
for some finite  constant $c_6 \ge 1$. It follows from Theorem \ref{ND} that, for
all $n \ge 1$ and all $x_0, x_1, \ldots, x_n \in \S^2$ such that
$\min_{1 \le i \le n}d_{\S^2}(x_0, x_i)\le \varepsilon_0$, we have
\[
{\rm Var} \left( T^{(k)}\left( x_{0}\right) |T^{(k)}\left( x_{1}\right)
,...,T^{(k)}\left( x_{n}\right) \right) \geq c_{2}\min_{1 \le i \le n}d_{\S^2}(x_0, x_i)^{(\alpha
-2-2k)}.
\]%
Hence the conclusions of Theorem \ref{MC-2}  follow from Theorem \ref{MC}.



\section{Appendix \label{Appendix}}

In this Appendix we collect a number of technical results which are mainly
instrumental to investigate the behaviour of the canonical Gaussian metric
at small angular distances, in terms of the spectral index $\alpha .$

Let us first recall the Mehler-Dirichlet representation for the Legendre
polynomials (see \cite[eq. (13.9)]{MPbook} or \cite[Section 5.3, eq. (2)]{VMK}),
\begin{equation}\label{Eq:Pl}
P_{\ell }\left( \cos \vartheta \right) =\frac{\sqrt{2}}{\pi }\int_0^{\vartheta }
\frac{\cos \left( \left( \ell +\frac{1}{2}\right) \psi \right) }{\left( \cos
\psi -\cos \vartheta \right) ^{1/2}}d\psi,
\end{equation}
where the integral on the right hand side for $\vartheta= 0$ is understood
as the limit as $\vartheta \downarrow 0$.

In order to study the asymptotic behaviour of $\sum_{\ell =1}^{\infty }\ell
^{-s}P_{\ell }\left( \cos \vartheta \right)$ as $\vartheta \to 0$,  we  will
make use of the following identity: For any $ s>1,$
\begin{equation}\label{Eq:Pl2}
\sum_{\ell =1}^{\infty }\ell ^{-s}\cos \left( \Big( \ell +\frac{1}{2}%
\Big) \psi \right) =\func{Re}\bigg[ \sum_{\ell =1}^{+\infty }\ell
^{-s}e^{i\left( \ell +\frac{1}{2}\right) \psi }\bigg] =\func{Re}\left[ e^{%
\frac{i}{2}\psi }Li_{s}\left( e^{i\psi }\right) \right],
\end{equation}
where $Li_{s}\left( z\right) $ denotes the polylogarithm function, which is
defined as
\[
Li_{s}\left( z\right) :=\sum_{k=1}^{\infty }\frac{z^{k}}{k^{s}}
\]%
for $\left\vert z\right\vert <1$, and then extended holomorphically to
$\left\vert z\right\vert \geq 1.$

As usual, denote by $O\left( f(\cdot) \right) $ the terms that are no lower than the
order of $f(\cdot) $ and $o\left( f(\cdot)\right) $ having higher order than $
f(\cdot)$.  We have the following result:

\begin{lemma}
\label{SumPoly}\bigskip\ For any constant $s>1$,  as $\vartheta \to 0+$, we have%
\begin{equation*}
\sum_{\ell =1}^{+\infty }\ell ^{-s}P_{\ell }\left( \cos \vartheta \right)
=\left\{
\begin{array}{ll}
\zeta(s)-K_{7}\left( \sin {\vartheta } \right) ^{s-1}+
o\left( (\sin {\vartheta })^{s-1}\right) , &\hbox{ if } 1<s<3, \\
\zeta(s) -K_{8}\sin ^{2} \vartheta  \big|\ln \sin {\vartheta }\big| +O\left(
\sin^2 \vartheta  \right) , &\hbox{ if } s=3,
\\
\zeta(s)-K_{9}\sin ^{2} {\vartheta }  +O\left(
\sin^3 \vartheta \right)  , &\hbox{ if } s>3,%
\end{array}%
\right.
\end{equation*}%
where $\zeta \left( s\right) $  is the Riemann zeta function,  $K_{7},\, K_{8},
K_{9}$ are positive constants depending only on $s$.
\end{lemma}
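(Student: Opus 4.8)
The plan is to combine the two tools assembled in the Appendix: the Mehler--Dirichlet representation \eqref{Eq:Pl} and the polylogarithm identity \eqref{Eq:Pl2}. First I would substitute \eqref{Eq:Pl} into the sum and interchange summation and integration. This is legitimate for $s>1$ because $\sum_\ell \ell^{-s}$ converges absolutely and uniformly in $\psi$, while the weight $(\cos\psi-\cos\vartheta)^{-1/2}$ is integrable on $[0,\vartheta]$ (it behaves like $(\vartheta-\psi)^{-1/2}$ near the endpoint); since $\int_0^\vartheta(\cos\psi-\cos\vartheta)^{-1/2}\,d\psi<\infty$, Tonelli's theorem applies. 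After the interchange, \eqref{Eq:Pl2} turns the inner sum into $\mathrm{Re}\big[e^{i\psi/2}Li_s(e^{i\psi})\big]$, so that
\[
\sum_{\ell=1}^\infty \ell^{-s}P_\ell(\cos\vartheta)=\frac{\sqrt2}{\pi}\int_0^\vartheta \frac{\mathrm{Re}\big[e^{i\psi/2}Li_s(e^{i\psi})\big]}{(\cos\psi-\cos\vartheta)^{1/2}}\,d\psi.
\]

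The heart of the argument is the behaviour of $Li_s(e^{i\psi})$ as $\psi\to0+$. I would use the classical expansion of the polylogarithm near its branch point $z=1$: for non-integer $s$,
\[
Li_s(e^{i\psi})=\Gamma(1-s)(-i\psi)^{s-1}+\sum_{k\ge0}\frac{\zeta(s-k)}{k!}(i\psi)^k,
\]
while for the integer value $s=3$ the algebraic term $\Gamma(1-s)(-i\psi)^{s-1}$ degenerates (because $\Gamma(1-s)$ has a pole) and is replaced by a term proportional to $\psi^{2}\log(-i\psi)$. Taking $\mathrm{Re}\big[e^{i\psi/2}\,\cdot\,\big]$ splits the integrand into a smooth part whose value at $\psi=0$ is $\zeta(s)$ and whose expansion begins at order $\psi^2$ (the would-be linear contribution from $k=1$ turns out to be $O(\psi^2)$), and a singular part whose leading term is $C_s\,\psi^{s-1}$ with $C_s=\Gamma(1-s)\sin(\pi s/2)$ (respectively a $\psi^2|\log\psi|$ term when $s=3$).

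It then remains to push these through the singular integral. The substitution $\psi=\vartheta u$ together with $\cos\psi-\cos\vartheta=2\sin\tfrac{\vartheta+\psi}2\sin\tfrac{\vartheta-\psi}2\sim \tfrac{\vartheta^2}2(1-u^2)$ shows that the transform is power-preserving, $\frac{\sqrt2}{\pi}\int_0^\vartheta \psi^\beta(\cos\psi-\cos\vartheta)^{-1/2}\,d\psi \asymp \vartheta^\beta$; for the constant term ($\beta=0$) one evaluates it exactly as $\tfrac2\pi\,\mathcal{K}(\sin(\vartheta/2))=1+O(\vartheta^2)$, where $\mathcal K$ is the complete elliptic integral of the first kind, which reproduces the leading constant $\zeta(s)$. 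Collecting contributions gives the three regimes: for $1<s<3$ the singular term $\vartheta^{s-1}$ (equivalently $(\sin\vartheta)^{s-1}$) dominates the $O(\vartheta^2)$ smooth correction, yielding $\zeta(s)-K_7(\sin\vartheta)^{s-1}+o\big((\sin\vartheta)^{s-1}\big)$; for $s>3$ the smooth $\sin^2\vartheta$ correction dominates, giving $\zeta(s)-K_9\sin^2\vartheta$ plus a remainder of strictly smaller order; and the borderline $s=3$ produces the logarithmic correction $-K_8\sin^2\vartheta\,|\log\sin\vartheta|$. The positivity of $K_7,K_8,K_9$ follows by tracking the sign of $\Gamma(1-s)\sin(\pi s/2)$ (and its residue at $s=3$) through the positive integral kernel.

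The main obstacle I anticipate is twofold. First, the remainder estimates must be made genuinely uniform and rigorous rather than merely formal: one has to justify term-by-term integration of the polylog expansion against the singular weight and bound the tail of the series uniformly on $[0,\vartheta]$, so that the smooth part really contributes only $\zeta(s)+O(\vartheta^2)$. Second, the critical case $s=3$ is delicate, since the pole of $\Gamma(1-s)$ and the branch cut collide to produce the logarithm. I would handle $s=3$ by treating the expansion as the limit $s\to3$ of the non-integer formula (or by invoking the known special expansion of $Li_3$ near $1$) and verifying that the divergence of $\zeta(s-2)$ as $s\downarrow3$ is precisely what converts the algebraic $\vartheta^{s-1}$ term into $\vartheta^2|\log\vartheta|$.
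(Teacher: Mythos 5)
Your proposal follows essentially the same route as the paper: Mehler--Dirichlet representation, the polylogarithm identity, the branch-point expansion of $Li_s(e^{i\psi})$ (with the degenerate logarithmic form at integer $s$), and integration of each term against the singular kernel via the substitution $\sin(\psi/2)=\sin(\vartheta/2)x$ and Beta-function evaluations, so the argument is sound. The only point to watch is that the non-integer expansion of $Li_s$ fails at \emph{every} integer $s$ (in particular at $s=2$, which lies in the range $1<s<3$), not just at $s=3$; the paper therefore runs the integer case separately for all $n\in\mathbb{N}$, and one checks that at $s=2$ the pole of $\Gamma(1-s)$ is cancelled by the zero of $\sin(\pi s/2)$ in the real part, so no logarithm survives there and the $1<s<3$ formula still holds.
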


\begin{proof}
We consider the two cases  $s\in
\mathbb{N}
$ and  $s\notin
\mathbb{N}
$, respectively.

{\it Case 1.} \, For $s\notin  \mathbb{N} ,$ we will exploit the series expansion
of $Li_{s}\left( e^{x}\right) $ for $x \in \mathbb C$ around the origin
 (see, \cite[eq. (9.4)]{Wood} or  \cite[Chapter 9]{GradRyzh}),
\begin{equation}
Li_{s}\left( e^{x}\right) =\Gamma \left( 1-s\right) \left( -x\right)
^{s-1}+\sum_{k=0}^{\infty }\frac{\zeta \left( s-k\right) }{k!}x^{k}.
\label{expLis}
\end{equation}%
Recall that the Riemann zeta function $\zeta \left( s\right) $ is well-defined
and holomorphic on the whole complex plane everywhere except for $s=1.$
The power series in (\ref{expLis}) converges in $\{x \in \mathbb C: |x| < 1\}$.

It follows that for $\vartheta >0$ small enough, and all $\psi \in (0, \vartheta)$,
\begin{equation}
\begin{split}
\func{Re}\big[ e^{\frac{i}{2}\psi }Li_{s}\left( e^{i\psi }\right) \big]
&=\cos \Big(\frac{\psi }{2}\Big)\left[ A_1   \psi ^{s-1}
  +\zeta \left( s\right) -\frac{%
1}{2}\zeta \left( s-2\right) \psi ^{2}   \right] \\
&\qquad + \sin \Big(\frac{\psi }{2}\Big) \Big[ B_1 \psi ^{s-1}\  -\zeta \left(
s-1\right) \psi +O \left(\psi ^{3}\right)\Big],  \label{ReLis}
\end{split}
\end{equation}%
where
\[
A_1 = \Gamma \left( 1-s\right)  \cos
\left( \frac{\pi }{2}\left( s-1\right) \right)\ \ \hbox{ and } B_1 =
\Gamma \left( 1-s\right) \sin
\left( \frac{\pi }{2}\left( s-1\right) \right)
\]
and we have incorporated $O\big(\psi^4\big)$ into
$O\big(\sin (\frac{\psi} 2 ) \psi^3 \big)$. Then, by (\ref{Eq:Pl}),
(\ref{Eq:Pl2}) and (\ref{ReLis})  above,  we have
\begin{equation}\label{6}
\begin{split}
&\sum_{\ell =1}^{\infty }\ell ^{-s}P_{\ell }\left( \cos \vartheta \right) \\
&= \frac{\sqrt{2}}{\pi }  \int_{0}^{\vartheta } \frac{\cos \frac{\psi }{2}} {%
\left( \cos \psi -\cos \vartheta \right) ^{1/2}}\left[ A_1
 \psi ^{s-1} +\zeta \left( s\right) -\frac{1}{2}\zeta \left( s-2\right) \psi ^{2}\right]
d\psi  \\ 
& \qquad + \frac{\sqrt{2}}{\pi } \int_{0}^{\vartheta }\frac{\sin \frac{\psi }{2}}{%
\left( \cos \psi -\cos \vartheta \right) ^{1/2}}\left[ B_1
  \psi ^{s-1}
-\zeta \left( s-1\right) \psi +O( \psi ^{3})\right] d\psi  \\
&:= J_1 + J_2.
\end{split}
\end{equation}
Recall that%
\begin{equation*}
\cos \psi -\cos \vartheta =2\sin ^{2}\frac{\vartheta }{2}-2\sin ^{2}\frac{%
\psi }{2}.
\end{equation*}%
A change of variable $x=\sin (\frac{\psi }{2})/\sin (\frac{\vartheta }{2})$ shows that for
$\gamma > 0$,
\begin{equation} \label{integ1}
\begin{split}
\int_{0}^{\vartheta } \frac{\sin ^{\gamma -1} \frac{\psi }{2}\, \cos \frac{\psi
}{2} } {\left( \cos \psi -\cos \vartheta \right) ^{1/2}} d \psi  &= \sqrt{2} \Big( \sin \frac{%
\vartheta }{2}\Big) ^{\gamma -1}\int_{0}^{1}\frac{x^{\gamma -1}}{\sqrt{
1-x^{2}}}dx  \\
&=\frac{\sqrt{2}}{2} B\Big( \frac{\gamma } {2},\frac{1}{2}\Big) \Big( \sin \frac{\vartheta }
{2}\Big) ^{\gamma-1} ,
\end{split}
\end{equation}%
and%
\begin{equation} \label{integ2}
\begin{split}
&\int_{0}^{\vartheta }\frac{\sin ^{\gamma -1}\frac{\psi }{2}}{\left( \cos
\psi -\cos \vartheta \right) ^{1/2}}d\psi  = \frac{\sqrt{2}}{2}\left( \sin \frac{\vartheta }
{2}\right) ^{\gamma -1}  \\
& \qquad \times \left[ B\Big( \frac{\gamma }{2},\frac{1}{2}\Big) +\frac{1}{6}
B\Big( \frac{\gamma }{2} +1,\frac{1}{2} \Big) \sin^2 \frac{\vartheta }{2}+
O\Big( \sin^{4}\frac{\vartheta }{2}\Big) \right].
\end{split}
\end{equation}%
By applying the following asymptotic expansion
\[
\frac{\psi^{\beta}}{\sin^{\beta} \psi} = 1 + \beta \frac{(\sin \psi)^2} 6
+ O\big(\sin^4 \psi\big), \quad \hbox{ if } \ \beta > 0,
\]
we can use (\ref{integ1}) and (\ref{integ2}) to derive
\begin{equation}
\label{Eq:J1}
\begin{split}
J_1 &= A_2 \Big( \sin \frac{\vartheta }{2}\Big)^{s-1}  +\frac{1}{\pi }\zeta
\left( s\right) B\Big( \frac{1}{2},\frac{1}{2}\Big) \\
& \qquad-\frac{2}{\pi }\zeta \left( s-2\right) B\Big( \frac{3}{2},\frac{1}{2}\Big)
 \sin ^{2}\frac{\vartheta }{2}%
+O\bigg( \Big( \sin \frac{\vartheta }{2}\Big) ^{s+1}\bigg),
\end{split}
\end{equation}%
where $A_2$ is an explicit positive constant depending on $s$ only.
Likewise, we have
\begin{equation}
\label{Eq:J2}
\begin{split}
J_2  &= B_2 \sin^{s}\frac{\vartheta }{2}  -\frac{2}{\pi }\zeta \left( s-1\right)
B\Big( \frac{3}{2},\frac{1}{2}\Big) \sin ^{2}\frac{\vartheta }{2}%
 + O\Big( \sin ^{s+2}\frac{\vartheta }{2}\Big) ,
\end{split}%
\end{equation}
where $B_2$ is an explicit positive constant depending on $s$ only. By
combining (\ref{Eq:J1}) and (\ref{Eq:J2}), we derive that  for $s > 1$ and
$s\notin  \mathbb{N},$
\begin{equation*}\label{J12}
\begin{split}
\sum_{\ell =1}^{\infty }\ell ^{-s}P_{\ell }\left( \cos \vartheta \right)
&=\zeta(s)-C_{1}\Big( \sin \frac{\vartheta }{2}\Big) ^{s-1}  -
C_{2}\sin ^{2}\frac{\vartheta }{2} \\
& \qquad
+O\bigg( \Big(\sin \frac{\vartheta }{2}\Big)^{(s+1) \wedge 4}\bigg),
\end{split}%
\end{equation*}
where $C_{1}$ and $C_{2}$ are positive constants depending
only on $s$, and  $a\wedge b = \min\{a, b\}$. Consequently,
\begin{equation} \label{Less3}
\sum_{\ell =1}^{\infty }\ell ^{-s} P_{\ell }\left( \cos \vartheta \right)
=\zeta(s) - C_{1}\Big( \sin \frac{\vartheta }{2}\Big) ^{s-1}+O\Big(
\sin^2 \frac{\vartheta }{2}  \Big)
\end{equation}%
for $1<s<3,\ s\neq 2$, and
\begin{equation} \label{Great3}
\sum_{\ell =1}^{\infty }\ell ^{-s} P_{\ell }\left( \cos \vartheta \right)
=\zeta(s) - C_{2} \sin^2 \frac{\vartheta }{2} +O\Big( \sin^4
\frac{\vartheta }{2}\Big) ,
\end{equation}%
for $ s>3, \, s \notin \mathbb N$.

{\it Case 2}. \ If $s > 1$ and $s = n \in \mathbb{N},$ we make use
of the following series expansion of $Li_{n}\left(e^{x}\right) $ 
(see  \cite[eq. (9.5)]{Wood} or \cite[Chapter 9]{GradRyzh}) for $x \in
\mathbb C$ with $ |x| < 1$,
\begin{equation}  \label{expLiN}
Li_{n}\left( e^{x}\right) =\frac{x^{n-1}}{\left( n-1\right) !}\left[
H_{n-1}-\ln \left( -x\right) \right] +\sum_{k=0,k\neq n-1}^{\infty }\frac{%
\zeta \left( n-k\right) }{k!}x^{k},
\end{equation}%
where $H_{n}$ denotes the $n$-th harmonic number:%
\begin{equation*}
H_{n}=\sum_{j=1}^{n}\frac{1}{j},\ \ \ H_0 = 0.
\end{equation*}%
It follows that%
\begin{equation*}
\begin{split}
\func{Re}\left[ e^{\frac{i}{2}\psi }Li_{n}\left( e^{i\psi }\right) \right]
&=\func{Re}\left[ e^{\frac{i}{2}\psi }\frac{i^{n-1}\psi ^{n-1}}{\left(
n-1\right) !}\Big( H_{n-1}-\ln \psi +\frac{\pi }{2}i\Big) \right]  \\
& \qquad +\func{Re}\bigg[ \sum_{k=0,k\neq n-1}^{n+1}\frac{\zeta \left( n-k\right) }{%
k!}i^{k}\psi ^{k}\bigg] +O\left( \psi ^{n+2}\right).
\end{split}
\end{equation*}%

If $n$ is an odd integer, then
\begin{equation*}\label{odd}
\begin{split}
\func{Re}\left[ e^{\frac{i}{2}\psi }Li_{n}\left( e^{i\psi }\right) \right]
&=\left(-1\right) ^{\left( n-1\right) /2}\frac{\psi ^{n-1}}{\left(
n-1\right) !}
\left[ \big( H_{n-1}-\ln \psi \big) \cos \frac{\psi }{2} -%
\frac{\pi }{2}\sin \frac{\psi }{2}\right]  \\
&\qquad +\sum_{k=0,k\neq \left( n-1\right) /2}^{\left( n+1\right) /2}\frac{\zeta
\left( n-2k\right) }{k!}\left( -1\right) ^{k}\psi ^{2k}+O\left( \psi
^{n+3}\right) .
\end{split}
\end{equation*}%
Thus, one can see that%
\begin{equation} \label{IntPsi}
\begin{split}
&\sum_{\ell =1}^{\infty }\ell ^{-n}P_{\ell }\left( \cos \vartheta \right)
= \frac{\sqrt{2}}{\pi }\frac{\left( -1\right) ^{\left( n-1\right) /2}}{%
\left( n-1\right) !}\\
&\qquad \times \int_{0}^{\vartheta }\frac{\psi ^{n-1}}{\left( \cos \psi
-\cos \vartheta \right)^{1/2}}\left[\big( H_{n-1}-\ln
\psi \big)  \cos \frac{\psi }{2} -\frac{\pi }{2}\sin \frac{\psi }{2}\right] d\psi  \\
&\qquad +\frac{\sqrt{2}}{\pi }\sum_{k=0,k\neq \left( n-1\right) /2}^{\left(
n+1\right) /2}\frac{\zeta \left( n-2k\right) }{k!}\left( -1\right)
^{k}\int_{0}^{\vartheta }\frac{\psi ^{2k}}{\left( \cos \psi -\cos \vartheta
\right) ^{1/2}}d\psi \\
&\qquad +O\bigg( \int_{0}^{\vartheta }\frac{\psi ^{n+3}}{\left(
\cos \psi -\cos \vartheta \right)^{1/2}}d\psi \bigg).
\end{split}%
\end{equation}

Observe that, in (\ref{IntPsi}), the term corresponding to $k =0$
goes to $\zeta(n)$ as $\vartheta \to 0+$, and the leading integral is
\[
J_3=\int_{0}^{\vartheta }\frac{\psi ^{n-1}  \ln \psi  }{\left( \cos \psi -\cos \vartheta
\right)^{1/2}}\cos \Big(\frac{\psi }{2}\Big)d\psi.
\]
By a change of variable $y=\sin ^{2}\frac{\psi }{2}/\sin^{2}\frac{\vartheta }{2},$
we can write $J_3$ as
\begin{equation*}  \label{3}
\begin{split}
J_3& = \frac{2^{n-1}}{\sqrt{2}}\sin ^{n-1}\frac{\vartheta }{2}\int_{0}^{1}\frac{%
y^{\frac{n}{2}-1}\left( 1+\sin ^{2}\vartheta \frac{n-1}{6}y+O\left( \sin
^{4}\vartheta y^{2}\right) \right) }{\left( 1-y\right) ^{1/2}}   \\
&\qquad \times \left( \ln y+2\left( \ln \sin \vartheta +\ln 2\right) +\frac{\sin
^{2}\vartheta }{6}y+O\left( \sin ^{4}\vartheta y^{2}\right) \right) dy.
\end{split}
\end{equation*}%
For $n\geq 3,$ we derive
\begin{equation}
\begin{split}\label{J3}
J_3 &=\frac{2^{n-1}} {\sqrt{2}} (1 + 2\ln 2) B_{\ln } \Big( \frac{n}{2},
\frac{1}{2}\Big) \sin^{n-1}\vartheta\\
&\qquad +\frac{2^{n}}{\sqrt{2}}  B\Big( \frac{n}{2},\frac{1}{2}\Big)
\sin ^{n-1}\vartheta \cdot \Big( \ln \sin \frac{%
\vartheta }{2}\Big)   +O\big(\sin^{4}\vartheta \big),
\end{split}
\end{equation}
where%
\begin{equation*}
B_{\ln }\left( a,b\right) =\int_{0}^{1}\frac{x^{a-1}\ln x}{\left( 1-x\right)
^{1-b}}dx=-\int_{0}^{1}B\left( y;a,b\right) \frac{1}{y}dy,
\end{equation*}%
and $B\left( y;a,b\right) $ is the so-called incomplete Beta function,
defined as%
\begin{equation*}
B\left( y;a,b\right) =\int_{0}^{y}\frac{x^{a-1}}{\left( 1-x\right) ^{1-b}}dx.
\end{equation*}%
By combining (\ref{IntPsi}) and (\ref{J3})  we see that, if $s=n >1$
is an odd integer,  then
\begin{equation} \label{SumP2}
\begin{split}
\sum_{\ell =1}^{\infty }\ell^{-n}P_{\ell }\left( \cos \vartheta \right)
&=\zeta(n) - D_{1}\sin ^{2}\frac{\vartheta }{2} + \delta _{n}^{3} D_{2 }
\sin ^{2}\frac{\vartheta }{%
2} \cdot \big( \ln \sin \frac{\vartheta }{2}\big)\\
&\qquad \quad + O\Big( \sin^{3} \frac{\vartheta }{2}\Big) ,
\end{split}
\end{equation}%
where $\delta _{i}^{j}=1$ if $i=j$ and $0$ otherwise,  $ D_{1}$
and $D_{2} $ are positive constants depending on $s$ only. Consequently,
if $s > 1$ is an odd integer, then
\begin{equation}
\sum_{\ell =1}^{\infty }\ell ^{-n}P_{\ell }\left( \cos \vartheta \right)
=\left\{
\begin{array}{ll}
\zeta(n)+D_{2 } \sin ^{2}\frac{\vartheta }{2} \big(\ln \sin \frac{\vartheta }{2}\big)%
+O\left( \sin ^{2}\frac{\vartheta }{2}\right) , & \hbox{ if }\, s=3, \\
\ \\
\zeta(n) -D_{1}\sin ^{2}\frac{\vartheta }{2}+O\left( \sin^{3}\frac{\vartheta }{2%
}\right) , & \hbox{ if }\, s\geq 5.%
\end{array}%
\right.   \label{Odd}
\end{equation}

Finally, we consider the case when $s =n >1$ is an even integer. It follows from
\eqref{expLiN} that
\begin{equation}
\begin{split}
\func{Re}\left[ e^{\frac{i}{2}\psi }Li_{s}\left( e^{i\psi }\right) \right]
&=\left( -1\right) ^{n/2}\frac{\psi ^{n-1}}{\left( n-1\right) !}\left[
\big(H_{n-1}-\ln \psi \big) \sin \frac{\psi }{2}  +\frac{\pi }{2}\cos \frac{%
\psi }{2}\right]  \\
&\qquad +\sum_{k=0}^{n/2+1}\frac{\zeta \left( n-2k\right) }{k!}\left( -1\right)
^{k}\psi ^{2k}+O\left( \psi ^{n+4}\right),
\end{split}%
\end{equation}
which leads to
\begin{equation}
\begin{split}
&\sum_{\ell =1}^{\infty }\ell ^{-n}P_{\ell }\left( \cos \vartheta \right)   
=\frac{\sqrt{2}}{\pi }\frac{\left( -1\right) ^{n/2}}{\left( n-1\right) !}\\
& \qquad \times \int_{0}^{\vartheta }\frac{\psi ^{n-1}}{\left( \cos \psi -\cos \vartheta
\right) ^{1/2}}\left[\big( H_{n-1}-\ln \psi \big) \sin \frac{\psi }{2} -
\frac{\pi }{2}\cos \psi \right] d\psi  \\
& \qquad +\frac{\sqrt{2}}{\pi }\sum_{k=0}^{n/2+1}\frac{\zeta \left( n-2k\right) }{k!%
}\left( -1\right) ^{k}\int_{0}^{\vartheta }\frac{\psi ^{2k}}{\left( \cos
\psi -\cos \vartheta \right) ^{1/2}}d\psi\\
&\qquad  +O\bigg( \int_{0}^{\vartheta }%
\frac{\psi ^{n+4}}{\left( \cos \psi -\cos \vartheta \right) ^{1/2}}d\psi \bigg).
\end{split}%
\end{equation}
Similarly to the case when $s$ is odd,  we can derive that for $s=n$ even,
\begin{equation} \label{SumP3}
\begin{split}
\sum_{\ell =1}^{\infty } \ell ^{-n}P_{\ell }\left( \cos \vartheta \right)
&=\zeta(n)- \delta _{n}^{2}\bigg\{ B\Big( 1,\frac{1}{2}\Big) \sin \frac{\vartheta }{2}
- D_{3}\sin ^{2}\frac{\vartheta }{2}\cdot\big( \ln \sin \frac{\vartheta }{2}\big) \bigg\} \\
&\qquad -D_{4}\sin ^{2}\frac{\vartheta }{2} +O\Big( \sin ^{3}\frac{\vartheta }{2}\Big),
\end{split}%
\end{equation}
where $D_{3}$ and $ D_{4 } $ are positive constants
depending on $s$ only. That is, for even integer $s>1,$  we have
\begin{equation} \label{Even}
\sum_{\ell =1}^{\infty }\ell ^{-s}P_{\ell }\left( \cos \vartheta \right)
=\left\{
\begin{array}{ll}
\zeta(s) - 2 \sin \frac{\vartheta }{2}+o\left( \sin
\frac{\vartheta }{2}\right) , &\hbox{ if }\, s=2, \\
\ \\
\zeta(s) +D_{2}\sin ^{2}\frac{\vartheta }{2}+O\left( \sin ^{3}\frac{\vartheta }{2%
}\right) , & \hbox{ if }\, s\geq 4.%
\end{array}%
\right.
\end{equation}%
This completes the proof of Lemma \ref{SumPoly} in view of
( \ref{Less3}),  (\ref{Great3}), (\ref{Odd}) and (\ref{Even}).
\end{proof}

\bigskip

\bigskip

{\bf Acknowledgement} Research of X. Lan is supported by NSFC grant 11501538;
D. Marinucci is supported by ERC Grant n. 277742 \emph{Pascal};
Y. Xiao is partially supported by NSF grants DMS-1307470 and DMS-1309856.

\newpage

\noindent Xiaohong Lan \\
School of Mathematical Sciences, University of Science and Technology of China\\
Hefei, 230061, People's Republic of China\\
email: xhlan@ustc.edu.cn

\medskip

\noindent Domenico Marinucci \\
Department of Mathematics, University of Rome Tor Vergata\\
via dell Ricerca Scientifica 1, 00133, Roma, Italy\\
email: marinucc@mat.uniroma2.it

\medskip

\noindent Yimin Xiao \\
Department of Statistics and Probability, Michigan State University \\
East Lansing, Michigan, 488224, USA\\
email: xiao@stt.msu.edu

\end{document}